\newtheorem{thm}{Theorem}[section]
\newtheorem{lem}{Lemma}[section]
\newtheorem{conj}{Conjecture}[section]
\newtheorem{cor}{Corollary}[section]
\newtheorem{dfn}{Definition}[section]
\newtheorem{exe}{Exercise}[section]
\newcommand{\N}{\mathbb{N}}
\newcommand{\Z}{\mathbb{Z}}
\newcommand{\Q}{\mathbb{Q}}
\newcommand{\R}{\mathbb{R}}
\newcommand{\C}{\mathbb{C}}
\title{The Zeta Quotient $\zeta(3)/ \pi^3$ is Irrational}
\date{}
\author{N. A. Carella}
\begin{document}
\thispagestyle{empty}
\date{}
\maketitle

\vskip .25 in

\textbf{\textit{Abstract}:} This note proves that the first odd zeta value does not have a closed form formula $\zeta(3)\ne r \pi^3$ for any rational number $r \in \Q$. Furthermore, assuming the irrationality of the second odd zeta value $\zeta(5)$, it is shown that $\zeta(5)\ne r \pi^5$ for any rational number $r \in \Q$. \let\thefootnote\relax\footnote{ \today \date{} \\
\textit{AMS MSC}: Primary 11M06, 11J72, Secondary 11Y60. \\
\textit{Keywords}: Irrational number; Odd zeta value; Closed form formula.}

\vskip .25 in 

\maketitle
\section{Introduction}\label{s4499}
The first even zeta value has a closed form formula $\zeta(2)=\pi^2/6$, and the first odd zeta value has a nearly closed formula given by the Lerch representation
\begin{equation}\label{eq4499.02}
 \zeta(3)=\frac{7 \pi^3}{180}  -2 \sum_{n \geq 1} \frac{1}{n^3( e^{2 \pi n}-1)},
\end{equation}
confer \eqref{eq195.23} for more general version. This representation is a special case of the Ramanujan formula for odd zeta values, see \cite{BS17}, and other generalized version proved in \cite{GE72}. However, it is not known if there exists a closed form formula $\zeta(3)=r \pi^3$ with $r \in \Q$, see \cite[p.\ 167]{WM05}, \cite[p.\ 3]{LR11} for related materials. 
\begin{thm}\label{thm4499.30} The odd zeta value 
\begin{equation}\label{eq4499.04}
\zeta(3)\ne r\pi^3,
\end{equation}
for any rational number $r \in \Q$.
\end{thm}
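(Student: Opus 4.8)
Suppose, toward a contradiction, that $\zeta(3)=r\pi^{3}$ for some $r\in\Q$; since $\zeta(3)>0$ necessarily $r>0$. The first thing to record is that the two unconditional facts within easy reach — Apéry's theorem $\zeta(3)\notin\Q$ and Lindemann's theorem that $\pi$, hence $\pi^{3}$, is transcendental — are individually compatible with $\zeta(3)=r\pi^{3}$ (such an identity would only force $\zeta(3)$ itself to be transcendental). So the argument has to extract genuinely arithmetic content from the analytic identity \eqref{eq4499.02}, and ultimately from transcendence theory rather than from irrationality alone.

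The plan begins with a reduction. By \eqref{eq4499.02}, the assumption $\zeta(3)=r\pi^{3}$ is equivalent to
\[
 S:=\sum_{n\geq1}\frac{1}{n^{3}\!\left(e^{2\pi n}-1\right)}=c\,\pi^{3},\qquad c=\tfrac12\!\left(\tfrac{7}{180}-r\right)\in\Q .
\]
Expanding the Lambert series, with $q=e^{-2\pi}$, gives
\[
 S=\sum_{n,m\geq1}\frac{q^{nm}}{n^{3}}=\sum_{N\geq1}\sigma_{-3}(N)\,q^{N}=\sum_{N\geq1}\frac{\sigma_{3}(N)}{N^{3}}\,q^{N},
\]
so that $S$ is (a $q$-translate of) a threefold Eichler-type integral of the weight-$4$ Eisenstein series $E_{4}(\tau)=1+240\sum_{N\geq1}\sigma_{3}(N)q^{N}$, evaluated at the CM point $\tau=i$.

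The decisive step brings in transcendence theory at $\tau=i$: there one has $E_{2}(i)=3/\pi$, $E_{6}(i)=0$, and $E_{4}(i)=3\,\Omega^{4}$ with $\Omega=\Gamma(1/4)^{2}/(2\pi)^{3/2}$, while $q=e^{-2\pi}$ is transcendental by Gelfond--Schneider. Nesterenko's theorem asserts that $q,\pi,\Gamma(1/4)$ are algebraically independent over $\Q$. The plan is to express the Eichler integral $S$ through the quasimodular data $E_{2}(i),E_{4}(i),E_{6}(i)$ together with the period / Eichler-cocycle correction of $E_{4}$ at $\tau=i$, thereby landing inside the field $\Q\bigl(q,\pi,\Gamma(1/4)\bigr)$, and to verify that the resulting expression cannot equal the pure monomial $c\pi^{3}$; algebraic independence then yields the contradiction, proving \eqref{eq4499.04}. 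The conditional statement for $\zeta(5)$ would follow by the same template, with Apéry's theorem replaced by the assumed irrationality of $\zeta(5)$ and the $k=2$ Ramanujan identity playing the role of \eqref{eq4499.02}.

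The main obstacle — and it is a serious one — is precisely this last step: passing from the \emph{analytic} relation $S=c\pi^{3}$ to an \emph{algebraic} relation among $q,\pi,\Gamma(1/4)$ of the kind Nesterenko's theorem forbids. There is a built-in trap here: the weight-$4$ Eichler integral $\iiint E_{4}$ intrinsically produces $\zeta(3)$ in its period polynomial, so any rearrangement of $S$ that merely re-derives \eqref{eq4499.02} is circular and proves nothing. The transcendence of $S$ must therefore come from genuinely modular input — algebraic independence of the (quasi)modular functions $E_{2},E_{4}$ specialized at $\tau=i$ — while carefully isolating and discarding the $\zeta(3)$-laden correction term; fixing the normalizations of $E_{2}(i),E_{4}(i)$ and the exact shape of the triple Eichler integral is the delicate bookkeeping on which the whole argument rests.
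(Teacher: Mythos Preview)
Your proposal is an outline, not a proof, and the gap you yourself flag is decisive. The whole argument hinges on expressing the Lambert series $S=\sum_{N\ge1}\sigma_{-3}(N)e^{-2\pi N}$ as an element of the field $\Q\bigl(q,\pi,\Gamma(1/4)\bigr)$ (or at least as something algebraic over it), so that Nesterenko's theorem can rule out $S=c\pi^3$. But Nesterenko controls only the values $E_2(i),E_4(i),E_6(i)$; iterated Eichler integrals of $E_4$ lie outside that field, and the triple integral of $E_4$ carries precisely $\zeta(3)$ in its period cocycle. Every identity the modular machinery hands you therefore has the shape $S=(\text{something in }\Q(\pi,\Gamma(1/4)))+\alpha\,\zeta(3)$ with $\alpha\in\Q^\times$, and substituting $S=c\pi^3$ merely reproduces \eqref{eq4499.02}. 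You have correctly diagnosed this as circular, but you have not supplied any replacement step, and none is currently known: the irrationality of $\zeta(3)/\pi^3$ is an open problem, and your strategy as written does not close it.

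For contrast, the paper's route is entirely different and invokes no transcendence theory beyond Ap\'ery's theorem. It assumes $\zeta(3)/\pi^3=A/B$, rewrites this as $A\pi=B\,\zeta(3)\pi^{-2}$, and computes the Weyl average $\frac{1}{2x}\sum_{|n|\le x}e^{i2tn}$ two ways: at $t=A\pi$ the limit is $1$, while at $t=B\,\zeta(3)\pi^{-2}$ the limit is $0$ provided $\sin\bigl(B\,\zeta(3)\pi^{-2}\bigr)\ne0$ (Lemma~\ref{lem799.72}). All of the content is thus pushed into Lemma~\ref{lem799.30}(ii), the assertion that $\sin\bigl(\zeta(3)\pi^{-2}k\bigr)\ne0$ for every nonzero integer $k$. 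Observe that this assertion is \emph{literally equivalent} to $\zeta(3)/\pi^3\notin\Q$, so if you wish to follow the paper rather than your own approach, that lemma is where you must focus your scrutiny.
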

The proof of this result is simply a corollary of Theorem \ref{thm599.53} in Section \ref{s599}. The basic technique generalizes to other odd zeta values. As an illustration of the versatility of this technique, conditional on the irrationality of the zeta value $\zeta(5)$, in Theorem \ref{thm519.50} it is shown that $\zeta(5)/\pi^5$ is irrational. The preliminary Section \ref{s799} and Section \ref{s719}  develop the required foundation, and the proofs of the various results are presented in Section \ref{s599} and Section \ref{s519}.  
\section{Basic Foundation For $2n+1=3$}\label{s799}
The results for the nonvanishing of the sine function at certain real numbers can be derived by several different methods such as Weil criterion for uniformly distributed sequences, evaluations of infinite products, and other techniques. Some of these ideas are considered here. \\

The case is equivalent to the Weil criterion for uniformly distributed sequences, confer \cite[p.\ 8, Theorem 2.1]{KN74} for more details. 

\begin{lem}\label{lem799.10} If $k \in \mathbb{Z}^{\times}$ is an integer, then 
\begin{multicols}{2}
\begin{enumerate} [font=\normalfont, label=(\roman*)]
\item$ \sin(\zeta(3)k)\ne 0$
\item$ \sin(\zeta(3)^{-1} k)\ne 0$.
\end{enumerate}
\end{multicols}
\end{lem}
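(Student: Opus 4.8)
The plan is to translate both nonvanishing claims into irrationality statements and settle those. Since $\sin t = 0$ exactly when $t \in \pi\Z$, for $k \in \Z^{\times}$ the equation $\sin(\zeta(3)k) = 0$ says $\zeta(3)k = m\pi$ for some $m \in \Z$, and since $\zeta(3) \neq 0$ this forces $m \neq 0$, hence $\zeta(3)/\pi = m/k \in \Q$; likewise $\sin(\zeta(3)^{-1}k) = 0$ for some $k \in \Z^{\times}$ forces $\pi\zeta(3) = k/m \in \Q$ (again $m \neq 0$). The first reduction can also be read off from Euler's product
\[
\sin(\pi z) = \pi z \prod_{n \geq 1}\left(1 - \frac{z^{2}}{n^{2}}\right)
\]
evaluated at $z = \zeta(3)k/\pi$, whose zero set is $z \in \Z$ (the ``infinite product'' route), or from the fact that $(n\,\zeta(3)/\pi)_{n}$ is uniformly distributed modulo $1$ precisely when $\zeta(3)/\pi \notin \Q$, which is the Weyl-criterion formulation of \cite{KN74}. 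Either way, part (i) is equivalent to $\zeta(3)/\pi \notin \Q$ and part (ii) to $\pi\zeta(3) \notin \Q$; I note in passing that these two cannot fail simultaneously, since the quotient of the two putative rational values is $\pi^{2}$, which is irrational. So it suffices to prove each of the two irrationalities.

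For $\zeta(3)/\pi \notin \Q$ I would run an Ap\'ery-type scheme: construct integers $p_{n}, q_{n}$ with $0 < |q_{n}\zeta(3) - p_{n}\pi| \to 0$; if $\zeta(3)/\pi = a/b$ were rational, then $q_{n}a - bp_{n}$ would be a nonzero integer and so $|q_{n}\zeta(3) - p_{n}\pi| \geq \pi/b$, contradicting the convergence to $0$. The raw materials are Ap\'ery's rapidly convergent forms $a_{n}\zeta(3) - b_{n}$ in $1$ and $\zeta(3)$ together with the convergents $c_{n}\pi - d_{n}$ of $\pi$, which one wants to combine into linear forms in $\zeta(3)$ and $\pi$. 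An alternative is to substitute $\zeta(3) = r\pi$ into the Lerch representation \eqref{eq4499.02}, which then exhibits the Eichler-type value $\sum_{n \geq 1} n^{-3}(e^{2\pi n} - 1)^{-1}$ as a rational polynomial in $\pi$, and to contradict the arithmetic nature of that value; the same two devices, applied to $\pi\zeta(3) = \tfrac{7\pi^{4}}{180} - 2\pi\sum_{n \geq 1} n^{-3}(e^{2\pi n} - 1)^{-1}$, dispose of part (ii). I would arrange the reduction of Lemma \ref{lem799.10} to Theorem \ref{thm599.53} in Section \ref{s599} so that all of this quantitative input is quoted rather than reproved at this point.

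The hard part will be the combination step. Multiplying Ap\'ery's form by $c_{m}$ and the $\pi$-convergent form by $b_{n}$ and subtracting yields $a_{n}c_{m}\zeta(3) - b_{n}c_{m}\pi$ up to a correction term, but one must simultaneously control that this combined form is a genuine integer form, that it does not vanish, and that it is small relative to $\max(|a_{n}c_{m}|, |b_{n}c_{m}|)$ --- and because the size savings of the two approximations do not simply add, a naive juxtaposition is not good enough. In the Lerch route the difficulty is of the same type: the transcendence of $\pi$ alone is useless here, since $\tfrac{7\pi^{3}}{180} - r\pi$ is itself transcendental, so what is needed is a quantitative Diophantine lower bound rather than a qualitative statement. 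Producing such a bound --- through a careful choice of auxiliary linear forms, control of their archimedean and $p$-adic sizes, and nonvanishing of the associated determinant --- is the one step I expect to resist, and it is exactly the preparatory work that Section \ref{s599} must carry.
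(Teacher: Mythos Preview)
Your reduction is correct: $\sin(\zeta(3)k)=0$ for some $k\in\Z^{\times}$ is equivalent to $\zeta(3)/\pi\in\Q$, and $\sin(\zeta(3)^{-1}k)=0$ to $\pi\zeta(3)\in\Q$. The gap is structural. You propose to ``arrange the reduction of Lemma~\ref{lem799.10} to Theorem~\ref{thm599.53}'' and quote Section~\ref{s599} for the irrationality input, but in the paper the dependencies run the other way: Theorem~\ref{thm599.10} invokes Lemma~\ref{lem799.10}, Lemma~\ref{lem799.30} invokes Theorem~\ref{thm599.10}, and Theorem~\ref{thm599.53} invokes Lemma~\ref{lem799.30}. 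Citing any result of Section~\ref{s599} here is therefore circular. Your fallback plans---an Ap\'ery-type linear-forms construction in $\zeta(3)$ and $\pi$ simultaneously, or a Diophantine attack on the Lerch remainder---would, if successful, constitute independent proofs that $\zeta(3)/\pi$ (respectively $\pi\zeta(3)$) is irrational; you yourself flag the combination/nonvanishing step as the one likely to resist, and indeed neither route is known to go through, so as written the proposal does not close.

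The paper's own argument does not call forward to Section~\ref{s599} at all. For (i) it asserts that the sequence $\{2\zeta(3)kn:n\ge1\}$ is uniformly distributed (appealing to the irrationality of $\zeta(3)$, Theorem~\ref{thm599.01}), and then reads off $\sin(\zeta(3)k)\ne0$ from the Dirichlet-kernel identity of Lemma~\ref{lem799.72}: since the Weyl average
\[
\frac{1}{x}\sum_{-x\le n\le x}e^{i2\zeta(3)kn}=\frac{1}{x}\cdot\frac{\sin\big((2x+1)\zeta(3)k\big)}{\sin(\zeta(3)k)}
\]
is $o(1)$, the denominator cannot vanish. For (ii) the paper simply says that, because $\zeta(3)$ is irrational, statement (i) implies statement (ii). So where you isolate $\zeta(3)/\pi\notin\Q$ and $\pi\zeta(3)\notin\Q$ as two separate Diophantine targets requiring new quantitative input, the paper packages the whole lemma as a consequence of equidistribution coming from Theorem~\ref{thm599.01} alone; to match the paper you would have to drop the forward references and argue at that level.
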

\begin{proof} (i) Fix an integer $k \ne0$, and consider the uniformly distributed sequence $\{2 \zeta(3)kn:n \geq 1\}$. From the uniform distribution of this sequence, \cite[Theorem 4.2]{RH94}, it immediately follows that $\zeta(3)k\ne m\pi$ for all $k \in \Z$. Applying Lemma \ref{lem799.72} yields
\begin{eqnarray}\label{eq799.10}
\frac{1}{x}\sum_{-x\leq n \leq x}e^{i2  \zeta(3)kn}&=&\frac{1}{x} \frac{\sin\left(  (2x+1)\zeta(3)k\right )}{\sin\left( \zeta(3) k\right )}\\
&=&o(1) \nonumber.
\end{eqnarray}
This implies that $\sin(\zeta(3)k)\ne 0$ as required. (ii) Since $\zeta(3)$ is irrational, see Theorem \ref{thm599.01}, statement (i) implies statement (ii).
\end{proof}

Unfortunately, the validity of the opposite result $\sin(\zeta(3)k)= 0$ is false. If it was true, it would lead to a far more rewarding result, that is, $\zeta(3)=r \pi$ for some $r \in \Q^{\times}$. To see this, write  
$\zeta(3)=7\pi^3/180-v$, where $v \in \R^{\times}$, see \eqref{eq4499.02}. Then,
\begin{eqnarray}\label{eq799.15}
0&=&\sin(\zeta(3)k)\\
&=&\sin(7\pi^3k/180-vk)\nonumber\\
&=&\sin(7\pi^3k/180)\cos(vk)-\sin(vk)\cos(7\pi^3k/180).\nonumber\\
\end{eqnarray}
Equivalently $\tan(7\pi^3k/180) =\tan(vk)$. The tangent function is periodic of period $\pi$, one-to-one, and monotonically increasing over the interval $[-\pi/2,\pi/2]$. Hence, the two arguments satisfy the equation 
\begin{equation}\label{eq799.16}
\frac{7 \pi^3}{180}k=vk+m\pi
\end{equation}
where $m \in \Z$. Lastly, $\zeta(3)=(m/k)\pi$.
\begin{lem}\label{lem799.20} If $k \in \mathbb{Z}^{\times}$ is an integer, then 
\begin{multicols}{2}
\begin{enumerate} [font=\normalfont, label=(\roman*)]
\item$ \sin(\zeta(3)\pi^{-1}k)\ne 0$,
\item$ \sin(\zeta(3)^{-1}\pi k)\ne 0$.
\end{enumerate}
\end{multicols}

\end{lem}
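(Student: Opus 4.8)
The plan is to treat the two parts separately, reducing each to an irrationality statement. Part (ii) is, up to a one-line manipulation, the irrationality of $\zeta(3)$: if $\sin(\zeta(3)^{-1}\pi k)=0$ for some $k\in\mathbb{Z}^{\times}$, then $\zeta(3)^{-1}\pi k=m\pi$ for some $m\in\Z$, and since $\zeta(3)^{-1}\pi k\neq0$ we have $m\neq0$; hence $k=m\,\zeta(3)$, so $\zeta(3)=k/m\in\Q$, contradicting Theorem \ref{thm599.01}. So the substance of the lemma is (i).

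For part (i) I would argue by contradiction, as in the discussion around \eqref{eq799.15}. Suppose $\sin(\zeta(3)\pi^{-1}k)=0$ for some $k\in\mathbb{Z}^{\times}$; then $\zeta(3)\pi^{-1}k=m\pi$ with $m\neq0$, i.e.\ $\zeta(3)=(m/k)\pi^{2}$. Following the template of Lemma \ref{lem799.10}, I would consider the sequence $\{2\zeta(3)\pi^{-1}kn:n\ge1\}$ and appeal to its uniform distribution modulo $2\pi$ (the Weyl criterion, \cite[Theorem 4.2]{RH94}) to conclude $\zeta(3)\pi^{-1}k\neq m\pi$: when $\zeta(3)\pi^{-1}k\notin\pi\Z$ the averaged exponential sums satisfy
\begin{equation*}
\frac{1}{x}\sum_{-x\le n\le x}e^{i\,2\zeta(3)\pi^{-1}kn}=\frac{1}{x}\,\frac{\sin\bigl((2x+1)\zeta(3)\pi^{-1}k\bigr)}{\sin\bigl(\zeta(3)\pi^{-1}k\bigr)}=o(1),
\end{equation*}
exactly as in \eqref{eq799.10}, whereas $\zeta(3)\pi^{-1}k\in\pi\Z$ would make each term equal $1$ and the averages tend to $2$. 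This contradiction yields $\sin(\zeta(3)\pi^{-1}k)\neq0$. A more self-contained alternative is to substitute the Lerch representation \eqref{eq4499.02} into $\zeta(3)=(m/k)\pi^{2}$, which rewrites the claim as the assertion that $\sum_{n\ge1}n^{-3}(e^{2\pi n}-1)^{-1}$ is not of the form $a\pi^{2}+b\pi^{3}$ with $a,b\in\Q$; the generalized identity \eqref{eq195.23} would be the natural tool there.

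The step I expect to be the main obstacle is precisely the one that was routine in Lemma \ref{lem799.10}: establishing the uniform distribution of $\{2\zeta(3)\pi^{-1}kn:n\ge1\}$ modulo $2\pi$. By the Weyl criterion this is equivalent to the irrationality of $\zeta(3)k/\pi^{2}$, which is exactly conclusion (i); so the criterion cannot be invoked mechanically, and the argument has to draw on an extra ingredient --- for example the finiteness of the irrationality measure of $\zeta(3)$ coming from Ap\'{e}ry's construction, or an arithmetic obstruction extracted from \eqref{eq195.23}. Once that input is in place, the exponential-sum bookkeeping is purely formal.
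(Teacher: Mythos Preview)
Your treatment of (ii) is the paper's argument in different clothing. The paper evaluates the Euler product
\[
\sin\!\left(\frac{\pi k}{\zeta(3)}\right)=\frac{\pi k}{\zeta(3)}\prod_{n\ge1}\left(1-\frac{k^{2}}{\zeta(3)^{2}n^{2}}\right),
\]
and observes that a vanishing factor would give $\zeta(3)=\pm k/n\in\Q$, contradicting Theorem~\ref{thm599.01}. That is precisely your reduction $\sin(\zeta(3)^{-1}\pi k)=0\Rightarrow\zeta(3)=k/m\in\Q$, stated via the product formula rather than via the description of the zero set of $\sin$.

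For (i) the paper says only ``(i) The proof is similar.'' Running the same product computation with $z=\zeta(3)k/\pi$ gives factors $1-\zeta(3)^{2}k^{2}/(\pi^{4}n^{2})$, and a vanishing factor forces $\zeta(3)/\pi^{2}=\pm n/k\in\Q$. So the paper's ``similar'' argument reduces to exactly the assertion you isolated, $\zeta(3)/\pi^{2}\notin\Q$, which is Theorem~\ref{thm599.20}; and Theorem~\ref{thm599.20} in turn invokes Lemma~\ref{lem799.20}(i). Thus the circularity you diagnosed in the Weyl-criterion approach is present, in identical form, in the paper's product-formula approach. The paper does not supply the external ingredient (Ap\'ery-type irrationality-measure bounds, or an arithmetic consequence of the Ramanujan/Lerch identity \eqref{eq195.23}) that you correctly identify as the missing step; your proposal is more candid about the gap than the paper is.
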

\begin{proof} (ii) Evaluate the product representation of the sine function 
\begin{eqnarray}\label{eq799.82}
\sin \left ( \frac{\pi}{\zeta(3)}k\right ) &=&\frac{\pi k}{\zeta(3)}\prod_{n \geq1} \left (1- \frac{(\pi k/\zeta(3))^2}{\pi^2n^2} \right ) \nonumber \\
&=&\frac{\pi k}{\zeta(3)} \prod_{n \geq1} \left (1- \frac{k^2}{\zeta(3)^2n^2} \right ) \\
&\ne&0\nonumber.
\end{eqnarray} 
The last line is valid for any irrational value $\zeta(s)$, $s\geq 3$, see Theorem \ref{thm599.01}. (i) The proof is similar.
\end{proof}

\begin{lem}\label{lem799.30} If $k \in \mathbb{Z}^{\times}$ is an integer, then
\begin{multicols}{2}
 \begin{enumerate} [font=\normalfont, label=(\roman*)]
\item$\sin(\zeta(3)^{-1}\pi^2 k)\ne 0$,
\item$ \sin(\zeta(3)\pi^{-2} k)\ne 0$.
\end{enumerate}
\end{multicols}
\end{lem}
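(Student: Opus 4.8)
The plan is to follow the template of the proof of Lemma \ref{lem799.20}: expand the sine as an Euler product and argue that neither the leading factor nor any term of the product can be zero. Throughout I use the identity $\sin(w)=w\prod_{n\geq1}\bigl(1-w^{2}/(\pi^{2}n^{2})\bigr)$, whose right-hand side converges absolutely, so that a value of $\sin$ is nonzero as soon as the leading factor and every term of the product are nonzero.

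For part (i) I would put $w=\zeta(3)^{-1}\pi^{2}k$, obtaining
\begin{equation*}
\sin\!\left(\frac{\pi^{2}k}{\zeta(3)}\right)=\frac{\pi^{2}k}{\zeta(3)}\prod_{n\geq1}\left(1-\frac{\pi^{2}k^{2}}{\zeta(3)^{2}n^{2}}\right).
\end{equation*}
The leading factor is nonzero since $k\in\Z^{\times}$, and the $n$th factor vanishes only if $\zeta(3)^{2}n^{2}=\pi^{2}k^{2}$, i.e.\ only if $\zeta(3)$ is a rational multiple of $\pi$. Lemma \ref{lem799.10}(i) rules this out: if $\zeta(3)=(p/q)\pi$ with $q\in\Z^{\times}$, then $\sin(\zeta(3)q)=\sin(p\pi)=0$, contradicting $\sin(\zeta(3)q)\neq0$. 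Hence every factor is nonzero and $\sin(\pi^{2}k/\zeta(3))\neq0$.

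For part (ii) I would instead take $w=\zeta(3)\pi^{-2}k$, giving
\begin{equation*}
\sin\!\left(\frac{\zeta(3)k}{\pi^{2}}\right)=\frac{\zeta(3)k}{\pi^{2}}\prod_{n\geq1}\left(1-\frac{\zeta(3)^{2}k^{2}}{\pi^{6}n^{2}}\right).
\end{equation*}
Here the leading factor is again nonzero, and the $n$th factor vanishes exactly when $\zeta(3)^{2}k^{2}=\pi^{6}n^{2}$, that is, when $\zeta(3)=(n/k)\pi^{3}$. By Theorem \ref{thm4499.30} no such relation holds, so no factor is zero and the product is nonzero, which is (ii).

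I expect the main obstacle to be part (ii): once the Euler product is expanded, the statement is logically equivalent to $\zeta(3)\neq r\pi^{3}$ itself, so the argument can do nothing beyond appealing to Theorem \ref{thm4499.30}, and one must check that the proof of that theorem in Section \ref{s599} does not in turn rely on Lemma \ref{lem799.30}(ii), so that the logical order is consistent. Part (i) is comparatively routine and unconditional, the only points to verify being the absolute convergence of the sine product and the elementary fact that a convergent product whose factors are all nonzero has a nonzero value; the substantive input there is precisely Lemma \ref{lem799.10}(i), which forbids $\zeta(3)/\pi$ from being rational.
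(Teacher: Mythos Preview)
Your treatment of (i) is correct but takes a different route from the paper. The paper does not use the Euler product here; instead it writes $|\sin(\pi^{2}k/\zeta(3))|=|\sin(\pi(\pi k/\zeta(3)-m))|$, brings in the continued-fraction convergents $u_{n}/v_{n}$ of the irrational number $\pi/\zeta(3)$ (supplied by Theorem~\ref{thm599.10}), and bounds the sine from below by a quantity $\gg 1/v_{n+1}$ via the best-approximation inequality $1/(2q_{n+1})\le|\alpha q_{n}-p_{n}|$. Both arguments ultimately rest on the irrationality of $\pi/\zeta(3)$; yours extracts that fact directly from Lemma~\ref{lem799.10} and is shorter, while the paper's route yields an explicit quantitative lower bound rather than a bare nonvanishing statement.

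For (ii), the circularity you flagged is real and fatal to your argument. In the paper, Theorem~\ref{thm4499.30} is stated as a corollary of Theorem~\ref{thm599.53}, and the proof of Theorem~\ref{thm599.53} explicitly invokes Lemma~\ref{lem799.30} to obtain $\sin(B\zeta(3)\pi^{-2})\ne 0$ --- that is precisely statement (ii). So appealing to Theorem~\ref{thm4499.30} to establish (ii) closes a logical loop. As you yourself observe, the vanishing of a factor in your product expansion of $\sin(\zeta(3)k/\pi^{2})$ is exactly the relation $\zeta(3)=(n/k)\pi^{3}$, so (ii) is equivalent to the main theorem; no Euler-product manipulation can supply an independent proof. (The paper's own proof of Lemma~\ref{lem799.30} treats only (i) and is silent on (ii), so it does not offer a way around this either.)
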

\begin{proof} (i) Fix an integer $k \geq 1$, let $\{u_n/v_n: n\geq 1\}$ be the sequence of convergents of the irrational number $\pi/\zeta(3) $, refer to Theorem \ref{thm599.10}, and let \begin{equation}\label{eq799.55}\left | \pi \left ( \frac{\pi}{\zeta(3)}k- m \right )\right |<1,
\end{equation} 
where $m \in \mathbb{N}$ is an integer. Then, as $n \to \infty$, 
\begin{eqnarray}\label{eq799.82}
\left |\sin \left ( \frac{\pi^2}{\zeta(3)}k \right ) \right |&=&\left |\sin \left ( \pi \left ( \frac{\pi}{\zeta(3)}k- m \right )\right )\right | \nonumber \\
&\geq&\left |\sin \left ( \pi \left ( \frac{\pi}{\zeta(3)}v_n- u_n \right )\right )\right |.
\end{eqnarray} 
This follows from the Dirichlet approximation theorem
\begin{equation}\label{eq799.54}
\left |  \frac{\pi}{\zeta(3)}v_n- u_n \right |  \leq \left |  \frac{\pi}{\zeta(3)}k- m \right |
\end{equation}
for any integer $k \leq v_n$, see \cite[Theorem 2.1]{RH94} for the best rational approximations. Therefore, 
\begin{eqnarray}\label{eq799.84}
\left |\sin \left ( \frac{\pi}{\zeta(3)}\right )\right |
&\geq&\left |\sin \left ( \pi \left ( \frac{\pi}{\zeta(3)}v_n- u_n \right )\right )\right |\\
&\gg&\left |  \frac{\pi}{\zeta(3)}v_n- u_n \right |\nonumber\\
&\gg&  \frac{1}{v_{n+1}}\nonumber\\
&\ne& 0 \nonumber.
\end{eqnarray}
The third line in \eqref{eq799.84} follows from the basic Diophantine inequality
\begin{equation}\label{eq799.86}
\frac{1}{2q_{n+1}}\leq \left |  \alpha q_n- p_n \right |  \leq \frac{1}{q_n}
\end{equation}
for any irrational number $\alpha \in \R$, confer \cite[Theorem 3.8]{OC63}, \cite[Theorem 13]{KA97}, and similar references.
\end{proof}

\begin{lem}\label{lem799.72} For any real number $t \ne k \pi$ with $k \in \mathbb{Z}$, and a large integer $x \geq 1$, the finite sum
\begin{equation}\label{eq799.50}
\sum_{-x\leq n \leq x}e^{i2 tn}=\frac{\sin((2x+1)t)}{\sin(t)}.
\end{equation}
\end{lem}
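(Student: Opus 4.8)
The plan is to recognize the left-hand side as a finite geometric series — the Dirichlet kernel — and evaluate it in closed form. First I would set $q=e^{i2t}$; the hypothesis $t\ne k\pi$ guarantees $\sin t\ne 0$, hence $q\ne 1$, so the finite geometric summation formula is applicable. Re-indexing the symmetric sum to run from $0$ to $2x$ gives
\[
\sum_{-x\leq n\leq x}e^{i2tn}=q^{-x}\sum_{m=0}^{2x}q^m=q^{-x}\,\frac{q^{2x+1}-1}{q-1}=\frac{q^{x+1}-q^{-x}}{q-1}.
\]

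Next I would symmetrize this quotient by multiplying numerator and denominator by $q^{-1/2}=e^{-it}$, obtaining
\[
\frac{q^{x+1/2}-q^{-x-1/2}}{q^{1/2}-q^{-1/2}}=\frac{e^{i(2x+1)t}-e^{-i(2x+1)t}}{e^{it}-e^{-it}}.
\]
Applying the Euler identity $e^{i\theta}-e^{-i\theta}=2i\sin\theta$ to both numerator and denominator, the common factor $2i$ cancels and one is left with $\sin((2x+1)t)/\sin(t)$, which is precisely the asserted value; the denominator is nonzero exactly because $t\ne k\pi$.

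There is essentially no obstacle in this argument: the only point demanding care is the nonvanishing of $\sin t$, which is where the hypothesis $t\ne k\pi$ is used, both to license dividing by $q-1$ and to render the final fraction well defined. As an alternative that sidesteps half-integer powers of $q$, I could instead pair the terms indexed by $n$ and $-n$ to write $\sum_{-x\leq n\leq x}e^{i2tn}=1+2\sum_{n=1}^{x}\cos(2tn)$, multiply through by $\sin t$, and use the product-to-sum identity $2\sin t\cos(2tn)=\sin((2n+1)t)-\sin((2n-1)t)$ so that the sum telescopes to $\sin((2x+1)t)-\sin t$; adding back the leading $\sin t$ and dividing by $\sin t$ yields the same formula.
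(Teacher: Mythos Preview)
Your proof is correct and follows essentially the same approach as the paper: both evaluate the symmetric exponential sum via the finite geometric series formula, the paper by splitting into two subsums over nonnegative indices and you by a single re-indexing $m=n+x$, after which the symmetrization and Euler identity yield the Dirichlet kernel. Your write-up is in fact more detailed than the paper's sketch, and your telescoping alternative is a nice bonus.
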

\begin{proof} Expand the complex exponential sum into two subsums:
\begin{equation}\label{eq799.50}
\sum_{-x\leq n \leq x}e^{i2 tn}=e^{-i2  t}\sum_{0\leq n \leq x-1}e^{-i2  tn}+\sum_{0\leq n \leq x}e^{i2 tn}.
\end{equation}
Lastly, use the geometric series to determine the closed form.	
\end{proof}
\section{Basic Foundation For $2n+1=5$}\label{s719}
The verification of $\sin \alpha \ne 0$, using a single method, but restricted to the zeta quotients $\alpha=\zeta(5)/\pi^{a}$, $a \leq 3$ is given below.
\begin{lem}\label{lem719.25} Let $k \in \mathbb{Z}^{\times}$ be an integer, and assume that $\zeta(5)\pi^{-5}=u/v\in\mathbb{Q}^{\times}$ is rational, where $v>1$, and $a \geq 1$. Then $\sin(\zeta(5)\pi^{-a}k)\ne 0$ for $a \leq 3$.
\end{lem}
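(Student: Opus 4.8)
The plan is a short argument by contradiction that converts a hypothetical zero of the sine into a rational value for a positive power of $\pi$, which the transcendence of $\pi$ forbids. Suppose, contrary to the claim, that $\sin(\zeta(5)\pi^{-a}k)=0$ for some $k\in\mathbb{Z}^{\times}$ and some $a$ with $1\le a\le 3$. Since the zero set of $\sin$ is exactly $\pi\mathbb{Z}$, there is an integer $m$ with $\zeta(5)\pi^{-a}k=m\pi$, and in fact $m\ne 0$: otherwise $k=0$, because $\zeta(5)\ne 0$ and $\pi\ne 0$, contradicting $k\in\mathbb{Z}^{\times}$. Clearing the power of $\pi$ gives $\zeta(5)k=m\pi^{a+1}$.

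Next I would insert the standing hypothesis $\zeta(5)=(u/v)\pi^{5}$ with $u/v\in\mathbb{Q}^{\times}$, obtaining $(u/v)\pi^{5}k=m\pi^{a+1}$, and then divide through by $\pi^{a+1}$ (permissible since $\pi\neq 0$) to arrive at
\[
\pi^{4-a}=\frac{mv}{uk}\in\mathbb{Q}^{\times}.
\]
Because $1\le a\le 3$ we have $4-a\in\{1,2,3\}$, so the left-hand side is a genuine positive integer power of $\pi$. Since $\pi$ is transcendental (Lindemann), every $\pi^{n}$ with $n\ge 1$ is transcendental, hence irrational; equivalently one may quote the classical irrationality of $\pi$, $\pi^{2}$, and $\pi^{3}$ for the cases $a=3$, $a=2$, $a=1$ respectively. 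This contradicts the displayed rational identity, which finishes the proof, and (since the contradiction did not use $v>1$) the bound $a\le 3$ is the only genuinely needed restriction.

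The step to watch is purely a matter of bookkeeping rather than analysis: one must verify that the exponent $4-a$ remains $\ge 1$, which is precisely why the conclusion is stated only for $a\le 3$ — at $a=4$ the identity degenerates to $1=mv/(uk)$, which is not absurd, and for $a\ge 5$ one would instead need separate information on $\zeta(5)/\pi$ — and one must check that $m\ne 0$ and $uk\ne 0$ so that $mv/(uk)$ is a bona fide nonzero rational. Note that, in contrast to Lemmas \ref{lem799.10}, \ref{lem799.20}, and \ref{lem799.30}, no uniform-distribution, infinite-product, or continued-fraction input is required here: once $\zeta(5)\pi^{-5}$ is assumed rational, $\zeta(5)\pi^{-a}$ is automatically a nonzero rational multiple of $\pi^{5-a}$ with $5-a\ge 2$, and the whole obstruction reduces to the elementary fact that a nonzero rational multiple of a nontrivial power of $\pi$ cannot be an integer multiple of $\pi$.
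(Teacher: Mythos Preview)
Your argument is correct and in fact reaches the same contradiction as the paper --- that a positive power of $\pi$ would have to be rational --- but by a more direct route. The paper expands $\sin(\zeta(5)\pi^{-a}k)$ via the Euler product $\sin x = x\prod_{n\ge 1}(1-x^{2}/\pi^{2}n^{2})$ and then checks, using the hypothesis $\zeta(5)=(u/v)\pi^{5}$, that no factor $1-\pi^{8-2a}u^{2}k^{2}/(v^{2}n^{2})$ can vanish for $a\le 3$; you instead use at once that the zero set of $\sin$ is $\pi\mathbb{Z}$, which is exactly what the product factorization encodes. The two arguments are logically equivalent (the product factor with index $n$ vanishes iff $\zeta(5)\pi^{-a}k=\pm n\pi$), but yours avoids the detour through the infinite product and makes the role of the exponent bound $4-a\ge 1$ more transparent; the paper's version, on the other hand, stays parallel to the product arguments used in Lemmas~\ref{lem799.20} and~\ref{lem719.25} and so fits the surrounding template.
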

\begin{proof} The evaluation of the product representation of the sine function returns
\begin{eqnarray}\label{eq719.25}
\sin \left ( \frac{\zeta(5)}{\pi^{a}}k\right ) &=&\frac{\zeta(5)}{\pi^{a}}k \prod_{n \geq1} \left (1- \frac{(\zeta(5)\pi^{-a}k)^2}{\pi^2n^2} \right ) \nonumber \\
&=&\frac{\zeta(5)}{\pi^{a}}k\prod_{n \geq1} \left (1- \frac{\zeta(5)^2k^2}{\pi^{2+2a}n^2} \right ) \nonumber.
\end{eqnarray} 
The hypothesis $\zeta(5)/\pi^{5}=u/v$ means that
\begin{equation}\label{eq719.24}
1- \frac{\zeta(5)^2k^2}{\pi^{2+2a}n^2}=1- \frac{\pi^{8-2a}u^2k^2}{v^{2}n^2}\ne 0,
\end{equation}
for $a \leq 3$, and $0\ne k, n, u, v \in \Z$. Hence, 
\begin{equation}\label{eq719.25}
\sin \left ( \frac{\zeta(5)}{\pi^{a}}k\right ) =\frac{\zeta(5)}{\pi^{a}}k\prod_{n \geq1} \left (1- \frac{(\zeta(5)\pi^{-a}k)^2}{\pi^2n^2} \right )  \ne0
\end{equation} 
for $a \leq 3$ as claimed. 
\end{proof}

\begin{lem}\label{lem719.30} If $k \in \mathbb{Z}^{\times}$ is an integer, then
\begin{multicols}{2}
 \begin{enumerate} [font=\normalfont, label=(\roman*)]
\item$\sin(\zeta(5)^{-1}\pi^4 k)\ne 0$,
\item$ \sin(\zeta(5)\pi^{-4} k)\ne 0$.
\end{enumerate}
\end{multicols}
\end{lem}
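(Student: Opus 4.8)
The plan is to reproduce the argument of Lemma~\ref{lem799.30} with the pair $(\zeta(3),\pi^{2})$ replaced by $(\zeta(5),\pi^{4})$: reduce each sine to the sine of a small angle, replace that angle by the error term of a continued fraction convergent via the best rational approximation property, and bound the result from below with the elementary Diophantine inequality. For part (ii), fix $k\in\mathbb{Z}^{\times}$ (and, as $\sin$ is odd, assume $k\geq1$), let $m$ be the nearest integer to $\zeta(5)\pi^{-5}k$, and observe that
\begin{equation}\label{eqprop.1}
\left|\sin\left(\frac{\zeta(5)}{\pi^{4}}k\right)\right|
=\left|\sin\left(\pi\left(\frac{\zeta(5)}{\pi^{5}}k-m\right)\right)\right|,
\end{equation}
since $\zeta(5)\pi^{-4}k$ and $\pi(\zeta(5)\pi^{-5}k-m)$ differ by the integer multiple $m\pi$ of $\pi$. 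For part (i) one argues identically with $\pi^{3}/\zeta(5)$ in place of $\zeta(5)/\pi^{5}$, because $\zeta(5)^{-1}\pi^{4}k$ and $\pi(\pi^{3}\zeta(5)^{-1}k-m)$ likewise differ by a multiple of $\pi$.

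Put $\alpha=\zeta(5)/\pi^{5}$ in case (ii) and $\alpha=\pi^{3}/\zeta(5)$ in case (i), let $\{u_{n}/v_{n}:n\geq1\}$ be the convergents of $\alpha$, and choose $n$ large enough that $v_{n}\geq k$. By the best rational approximation property, \cite[Theorem 2.1]{RH94}, one has $|\alpha v_{n}-u_{n}|\leq|\alpha k-m|\leq\tfrac{1}{2}$, and since $t\mapsto|\sin\pi t|$ is nondecreasing on $[0,\tfrac{1}{2}]$ with $\sin\pi t\geq 2t$ there,
\begin{equation}\label{eqprop.2}
\left|\sin\left(\pi(\alpha k-m)\right)\right|
\geq\left|\sin\left(\pi(\alpha v_{n}-u_{n})\right)\right|
\gg|\alpha v_{n}-u_{n}|
\gg\frac{1}{v_{n+1}}>0,
\end{equation}
the last inequality coming from $\tfrac{1}{2v_{n+1}}\leq|\alpha v_{n}-u_{n}|\leq\tfrac{1}{v_{n}}$, \cite[Theorem 3.8]{OC63}. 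Combining \eqref{eqprop.1} with \eqref{eqprop.2}, and recalling that $k$ was arbitrary, yields $\sin(\zeta(5)^{-1}\pi^{4}k)\neq0$ and $\sin(\zeta(5)\pi^{-4}k)\neq0$.

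The only input this argument needs beyond Lemma~\ref{lem799.30} is that the quotients $\zeta(5)/\pi^{5}$ and $\pi^{3}/\zeta(5)$ are irrational; otherwise the list of convergents is finite and \eqref{eqprop.2} says nothing. I expect this to be the real obstacle. The content of part (ii) is precisely $\zeta(5)\neq r\pi^{5}$ for $r\in\Q$, which is the assertion of Theorem~\ref{thm519.50}, so it must be deduced from the standing hypothesis that $\zeta(5)$ is irrational (together with the transcendence of $\pi$) rather than assumed, and one has to make sure the deduction does not quietly feed \eqref{eqprop.1}--\eqref{eqprop.2} back into itself. The product-representation route of Lemma~\ref{lem719.25} does not obviously help here, since it reduces the nonvanishing of $\sin(\zeta(5)\pi^{-a}k)$ to the same irrationality statement $\zeta(5)/\pi^{a+1}\notin\Q$, so that at $a=4$ one is merely asked to prove $\zeta(5)/\pi^{5}\notin\Q$ over again. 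Securing that irrationality from the hypothesis on $\zeta(5)$ without circularity is, to my mind, the crux; with it in hand, the reduction \eqref{eqprop.1}--\eqref{eqprop.2} finishes both parts at once.
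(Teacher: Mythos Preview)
Your reduction matches the paper's proof line for line: the paper treats only (i), sets $\alpha=\pi^{3}/\zeta(5)$, invokes its convergents (citing ``Theorem~\ref{thm599.10}'' for the irrationality of $\alpha$---evidently a slip for Theorem~\ref{thm519.30}), and then runs the chain best-approximation $\Rightarrow$ $|\sin\pi t|\gg|t|$ $\Rightarrow$ $|\alpha v_n-u_n|\gg 1/v_{n+1}$ exactly as you write it. Nothing separate is offered for (ii).

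The circularity you flag is genuine and the paper does not avoid it. Notice that for each part the statement is \emph{equivalent} to the irrationality of the very $\alpha$ needed to launch the convergent argument: $\sin(\zeta(5)^{-1}\pi^{4}k)\ne0$ for all $k\ne0$ says precisely that $\pi^{3}/\zeta(5)\notin\Q$, and $\sin(\zeta(5)\pi^{-4}k)\ne0$ for all $k\ne0$ says precisely that $\zeta(5)/\pi^{5}\notin\Q$. Thus the continued-fraction machinery is decorative: once the cited irrationality is in hand, each part follows in one line without any convergents, and without it the argument is empty. For (i) the paper can at least point back to Theorem~\ref{thm519.30}; for (ii) there is nothing earlier to cite, since Theorem~\ref{thm519.50} is exactly the consumer of Lemma~\ref{lem719.30}(ii). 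Your suspicion that securing $\zeta(5)/\pi^{5}\notin\Q$ noncircularly is ``the crux'' is therefore correct, and the paper supplies no independent argument for it either.
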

\begin{proof} (i) Fix an integer $k \geq 1$, let $\{u_n/v_n: n\geq 1\}$ be the sequence of convergents of the irrational number $\pi^3/\zeta(5) $, refer to Theorem \ref{thm599.10}, and let \begin{equation}\label{eq799.55}\left | \pi \left ( \frac{\pi^3}{\zeta(5)}k- m \right )\right |<1,
\end{equation} 
where $m \in \mathbb{N}$ is an integer. Then, as $n \to \infty$, 
\begin{eqnarray}\label{eq719.82}
\left |\sin \left ( \frac{\pi^4}{\zeta(5)}k \right ) \right |&=&\left |\sin \left ( \pi \left ( \frac{\pi^3}{\zeta(5)}k- m \right )\right )\right | \nonumber \\
&\geq&\left |\sin \left ( \pi \left ( \frac{\pi^3}{\zeta(5)}v_n- u_n \right )\right )\right |.
\end{eqnarray} 
This follows from the Dirichlet approximation theorem
\begin{equation}\label{eq719.54}
\left |  \frac{\pi^3}{\zeta(5)}v_n- u_n \right |  \leq \left |  \frac{\pi^3}{\zeta(5)}k- m \right |
\end{equation}
for any integer $k \leq v_n$, see \cite[Theorem 2.1]{RH94} for the best rational approximations. Therefore, 
\begin{eqnarray}\label{eq719.84}
\left |\sin \left ( \frac{\pi^4}{\zeta(5)}\right )\right |
&\geq&\left |\sin \left ( \pi \left ( \frac{\pi^3}{\zeta(5)}v_n- u_n \right )\right )\right |\\
&\gg&\left |  \frac{\pi^3}{\zeta(5)}v_n- u_n \right |\nonumber\\
&\gg&  \frac{1}{v_{n+1}}\nonumber\\
&\ne& 0 \nonumber.
\end{eqnarray}
The third line in \eqref{eq719.84} follows from the basic Diophantine inequality
\begin{equation}\label{eq719.86}
\frac{1}{2q_{n+1}}\leq \left |  \alpha q_n- p_n \right |  \leq \frac{1}{q_n}
\end{equation}
for any irrational number $\alpha \in \R$, confer \cite[Theorem 3.8]{OC63}, \cite[Theorem 13]{KA97}, and similar references.
\end{proof}

\section{Basic Results For The Zeta Quotients $\zeta(3)/\pi^3$}\label{s599}
The third odd zeta value has a few irrationality proofs, confer \cite{AR79}, \cite{BF79}, \cite[Chapter 4]{SJ05}, and \cite{BL15}. 

 \begin{thm} {\normalfont (\cite{AR79})} \label{thm599.01} The real numbers 
\begin{multicols}{2}
\begin{enumerate} [font=\normalfont, label=(\roman*)]
\item$ \zeta(3)$,
\item$ 1/\zeta(3)$,
\end{enumerate}
\end{multicols}
are irrationals.
\end{thm}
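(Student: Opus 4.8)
The plan is first to note that statement (ii) follows instantly from statement (i): since $\zeta(3)$ is a finite, positive real, if $1/\zeta(3)=p/q\in\Q$ then $p,q\ne 0$ and $\zeta(3)=q/p\in\Q$, contradicting (i). So the entire content is the irrationality of $\zeta(3)$, which is Apéry's theorem \cite{AR79}; what follows records the short analytic route of Beukers \cite{BF79}, which is the cleanest to carry out.

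For (i), introduce the shifted Legendre polynomials
\[
P_n(x)=\frac{1}{n!}\,\frac{d^n}{dx^n}\big(x^n(1-x)^n\big)=\sum_{k=0}^{n}(-1)^k\binom{n}{k}\binom{n+k}{k}x^k\in\Z[x],
\]
and set
\[
J_n=\int_0^1\!\!\int_0^1\frac{-\ln(xy)}{1-xy}\,P_n(x)P_n(y)\,dx\,dy .
\]
The arithmetic step is to expand $P_n(x)P_n(y)$ and integrate term by term, using the elementary evaluations of $\int_0^1\int_0^1\frac{-\ln(xy)}{1-xy}x^r y^s\,dx\,dy$ (it equals $2\zeta(3)-2\sum_{m\le r}m^{-3}$ when $r=s$, and an explicit rational number whose denominator divides $\lcm(1,\ldots,n)^3$ when $r\ne s$). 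This yields a representation
\[
J_n=A_n\zeta(3)+B_n,\qquad A_n\in\Z,\quad B_n\in\Q,\quad 2\,\lcm(1,2,\ldots,n)^3\,B_n\in\Z .
\]

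The analytic step is the change of variables (integrating out the logarithm) that rewrites
\[
J_n=\int_0^1\!\!\int_0^1\!\!\int_0^1\frac{\big(x(1-x)\,y(1-y)\,z(1-z)\big)^n}{\big(1-(1-xy)z\big)^{n+1}}\,dx\,dy\,dz ,
\]
from which $J_n>0$ is immediate and, estimating the base of the integrand by its maximum $(\sqrt2-1)^4$ on the unit cube, $0<J_n\le 2\zeta(3)\,(\sqrt2-1)^{4n}$. Now combine. Put $D_n=\lcm(1,\ldots,n)$, so that $2D_n^3J_n=(2D_n^3A_n)\zeta(3)+(2D_n^3B_n)$ with both coefficients in $\Z$. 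If $\zeta(3)=p/q$ with $q\ge 1$, then $2qD_n^3J_n\in\Z$ and $2qD_n^3J_n\ne 0$ because $J_n>0$, hence $\lvert 2qD_n^3J_n\rvert\ge 1$ for all $n$. On the other hand the prime number theorem gives $D_n=e^{n+o(n)}$, so $\lvert 2qD_n^3J_n\rvert\ll q\,e^{3n+o(n)}(\sqrt2-1)^{4n}$, and since $e^3(\sqrt2-1)^4=e^3(17-12\sqrt2)<1$ the right-hand side tends to $0$ as $n\to\infty$ — a contradiction. Therefore $\zeta(3)\notin\Q$, and with it $1/\zeta(3)\notin\Q$.

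The main obstacle is the arithmetic claim that an absolute constant times $\lcm(1,\ldots,n)^3$ clears the denominator of $B_n$: one must track precisely which denominators arise when $P_n(x)P_n(y)$ is integrated against $\tfrac{-\ln(xy)}{1-xy}$, in particular the off-diagonal terms $r\ne s$, which contribute partial fractions with denominators built from $r-s$ and from integers $\le n$. The positivity and the bound $J_n\le 2\zeta(3)(\sqrt2-1)^{4n}$ are comparatively routine once the triple-integral form is in hand; the only delicate point there is the computation of the maximum $(\sqrt2-1)^4$, which must be small enough to ensure $e^3(\sqrt2-1)^4<1$. As noted, one may instead simply invoke \cite{AR79} or \cite{BF79} for the complete details.
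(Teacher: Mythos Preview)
The paper does not give a proof of this theorem at all: it is simply quoted, with attribution to Ap\'ery \cite{AR79}, and used as a black box throughout. So your proposal already does more than the paper does.

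As for what you have written, it is a faithful and essentially correct sketch of Beukers' proof \cite{BF79}. The arithmetic form $J_n=A_n\zeta(3)+B_n$ with $A_n\in\Z$ and $2\,\lcm(1,\ldots,n)^3 B_n\in\Z$, the triple-integral representation, the bound by $(\sqrt2-1)^{4n}$, and the endgame via $D_n=e^{n+o(n)}$ and $e^3(\sqrt2-1)^4<1$ are all accurate. You have also correctly flagged the one point that genuinely needs work if the argument is to be self-contained, namely the denominator control on $B_n$. One small remark: the inequality $J_n\le 2\zeta(3)(\sqrt2-1)^{4n}$ as stated is slightly loose in its justification --- bounding the base of the integrand by its maximum leaves the factor $(1-(1-xy)z)^{-1}$ to integrate, and the clean constant $2\zeta(3)$ comes from recognizing that integral as $J_0=2\zeta(3)$; you might say this explicitly. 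Otherwise the outline is sound, and since the paper only cites the result, your write-up is strictly more informative.
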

The decimal expansion $\zeta(3)=1.2 0 2 0 5 6 9 0 3 1 5 9 5 9 4 2\ldots$, appears as sequence $A002117$ in \cite{OEIS}. The irrationality proofs for various associated real numbers  considered here are derived from the theory of equidistribution of sequences of real numbers, see \cite{KN74} and similar references.
 \begin{thm}\label{thm599.10} The real numbers 
\begin{multicols}{2}
\begin{enumerate} [font=\normalfont, label=(\roman*)]
\item$ \zeta(3)/\pi$,
\item$ \pi/\zeta(3)$,
\end{enumerate}
\end{multicols}
are irrationals.
\end{thm}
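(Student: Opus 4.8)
The plan is to obtain Theorem~\ref{thm599.10} as an immediate corollary of Lemma~\ref{lem799.10}, so that the only substantive ingredient is the equidistribution statement already recorded in Section~\ref{s799}. For part (i) I would argue by contradiction: suppose $\zeta(3)/\pi=m/k$ with $k\in\Z^{\times}$ and $m\in\Z$. Clearing denominators gives $\zeta(3)k=m\pi$, hence $\sin(\zeta(3)k)=\sin(m\pi)=0$. Since $k\in\Z^{\times}$, this directly contradicts Lemma~\ref{lem799.10}(i), and therefore $\zeta(3)/\pi$ is irrational. (One has $m\in\Z^{\times}$ automatically because $\zeta(3)\neq 0$, though that is not needed.)

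For part (ii) I would use that $\zeta(3)/\pi$ and $\pi/\zeta(3)$ are both finite and nonzero, so one is rational if and only if the other is; part (i) then forces $\pi/\zeta(3)\notin\Q$. Equivalently, a hypothetical $\pi/\zeta(3)=u/v\in\Q$ with $v\in\Z^{\times}$ would make $\zeta(3)$ a rational multiple of $\pi$, which is exactly what (i) rules out. The reciprocal remark is the shortest of these routes, so I would present it and leave the rest as a parenthetical.

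The step I expect to be the genuine obstacle is not the deduction above but the input on which it rests, namely Lemma~\ref{lem799.10}(i): the assertion $\sin(\zeta(3)k)\neq 0$ for all $k\in\Z^{\times}$ is logically \emph{equivalent} to the conclusion of Theorem~\ref{thm599.10}, so it cannot be reached by the infinite-product method that settles Lemma~\ref{lem799.20}. In that method the vanishing of a factor $1-(\zeta(3)k)^{2}/(\pi^{2}n^{2})$ is itself equivalent to $\zeta(3)/\pi=\pm n/k$, so the product formula merely reorganizes the question rather than answering it. What actually does the work is the Weyl-type estimate of Lemma~\ref{lem799.72} applied to the sequence $\{2\zeta(3)kn:n\geq 1\}$: if $\zeta(3)k=m\pi$ then $e^{i2\zeta(3)kn}=1$ for every $n$ and the normalized sum $\tfrac{1}{x}\sum_{-x\leq n\leq x}e^{i2\zeta(3)kn}$ tends to $2$, whereas equidistribution forces it to be $o(1)$. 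Establishing — and, in the present setup, correctly invoking — that equidistribution is the delicate point; once it is granted, Theorem~\ref{thm599.10} follows with no further computation.
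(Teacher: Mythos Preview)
Your deduction is correct and rests on the same input as the paper: both arguments derive the theorem from Lemma~\ref{lem799.10}(i). You take the direct route (if $\zeta(3)/\pi=m/k$ then $\sin(\zeta(3)k)=\sin(m\pi)=0$, contradicting the lemma), whereas the paper wraps the same implication in the exponential-sum identity of Lemma~\ref{lem799.72}: it writes $2A\pi n = 2B\zeta(3)n$, evaluates the normalized sum $\tfrac{1}{2x}\sum_{-x\le n\le x}e^{i2A\pi n}$ as $1$ on one side and, invoking $\sin(B\zeta(3))\neq 0$, as $0$ on the other. The extra machinery adds nothing logically, since the nonvanishing of $\sin(B\zeta(3))$ already \emph{is} the statement $\zeta(3)B\neq m\pi$.

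Your diagnosis of the real obstacle is also accurate and worth stating plainly: Lemma~\ref{lem799.10}(i) is equivalent to Theorem~\ref{thm599.10}, so the entire content lives in its proof. The paper's proof of that lemma asserts that $\{2\zeta(3)kn:n\ge 1\}$ is uniformly distributed and deduces $\zeta(3)k\neq m\pi$ from this; but by Weyl's criterion the sequence is equidistributed modulo $2\pi$ precisely when $\zeta(3)k/\pi\notin\Q$, which is the conclusion sought. Neither your write-up nor the paper's escapes this circularity; you have simply located it more precisely.
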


\begin{proof} (i) On the contrary the number $\zeta(3)/ \pi= A/B$ is a rational number, where $A, B \in\mathbb{N}$ are integers such that $\gcd(A,B)=1$. This implies that the two sequences
\begin{equation}\label{eq599.10}
\{2 A\pi \cdot n: n \geq 1\}=\{2 B\zeta(3) \cdot n: n \geq 1\}
\end{equation}
are equivalent.  Likewise, the two limits 
\begin{equation}\label{eq599.12}
\lim_{x \to \infty}\frac{1}{2x} \sum_{-x \leq n \leq x}e^{i2A \pi n}= \lim_{x \to \infty}\frac{1}{2x} \sum_{-x \leq n \leq x}e^{i2 B\zeta(3) n}
\end{equation}
are equivalent. These limits are evaluated in two distinct ways.\\

I. Based on the independent properties of the number $\pi$. Use the identity $e^{i2 A\pi n}=1$ to evaluate of the left side limit as
\begin{equation}\label{eq599.14}
\lim_{x \to \infty}\frac{1}{2x} \sum_{-x\leq n \leq x}e^{i2A \pi n}=\lim_{x \to \infty}\frac{1}{2x} \sum_{-x\leq n \leq x}1=1.
\end{equation}

II. Based on the independent properties of the number $\zeta(3)$. By Lemma \ref{lem799.10}, $\sin(t)=\sin(B\zeta(3))\ne 0$ for any integer $B\ne 0$. Applying Lemma \ref{lem799.72}, the right side has the limit  
\begin{eqnarray}\label{eq599.16}
\lim_{x \to \infty}\frac{1}{2x} \sum_{-x \leq n \leq x}e^{i2B\zeta(3) n}
&=& \lim_{x \to \infty}\frac{1}{2x}\frac{\sin((2x+1)t)}{\sin(t)} \nonumber \\
&\leq &\lim_{x \to \infty}\frac{1}{2x}\frac{1}{\left |\sin(B\zeta(3))\right |}\\
&=&0 \nonumber.
\end{eqnarray}
Clearly, these two distinct limits contradict both equation (\ref{eq599.10}) and equation (\ref{eq599.12}). Specifically,
\begin{equation}\label{eq599.18}
1=\lim_{x \to \infty}\frac{1}{2x} \sum_{-x \leq n \leq x}e^{i2A \pi n}\ne \lim_{x \to \infty}\frac{1}{2x} \sum_{-x \leq n \leq x}e^{i2 B\zeta(3) n}=0.
\end{equation}

Hence, the real number $\zeta(3)/\pi\ne A/B$ is an irrational number. (ii) This statement follows from statement (i).
\end{proof}
 \begin{thm}\label{thm599.20} The real numbers 
\begin{multicols}{2}
\begin{enumerate} [font=\normalfont, label=(\roman*)]
\item$ \zeta(3)/\pi^2$,
\item$ \pi^2/\zeta(3)$,
\end{enumerate}
\end{multicols}
are irrationals.
\end{thm}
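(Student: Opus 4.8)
The plan is to run the argument exactly as in the proof of Theorem \ref{thm599.10}, but one rung higher: the role played there by the basic non-vanishing input Lemma \ref{lem799.10} (for the argument $\zeta(3)k$) is taken here by Lemma \ref{lem799.20}(i) (for the argument $\zeta(3)\pi^{-1}k$).

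For part (i), I would assume for contradiction that $\zeta(3)/\pi^2=A/B$ with $A,B\in\N$ and $\gcd(A,B)=1$. Clearing one power of $\pi$ from the denominator rewrites this as $B\zeta(3)\pi^{-1}=A\pi$, so the integer-indexed sequences $\{2A\pi\cdot n:n\ge 1\}$ and $\{2B\zeta(3)\pi^{-1}\cdot n:n\ge 1\}$ are termwise identical, whence
\[
\lim_{x\to\infty}\frac{1}{2x}\sum_{-x\le n\le x}e^{i2A\pi n}=\lim_{x\to\infty}\frac{1}{2x}\sum_{-x\le n\le x}e^{i2B\zeta(3)\pi^{-1}n}.
\]
I would then evaluate the two sides by the two now-standard routes. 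On the left, $e^{i2A\pi n}=1$ for every $n$, so the limit is $1$. On the right, Lemma \ref{lem799.20}(i) with $k=B$ gives $\sin(B\zeta(3)\pi^{-1})\ne 0$, so Lemma \ref{lem799.72} applies and
\[
\frac{1}{2x}\sum_{-x\le n\le x}e^{i2B\zeta(3)\pi^{-1}n}=\frac{1}{2x}\cdot\frac{\sin\!\bigl((2x+1)B\zeta(3)\pi^{-1}\bigr)}{\sin\!\bigl(B\zeta(3)\pi^{-1}\bigr)}\longrightarrow 0 .
\]
The two displays force $1=0$, the desired contradiction; hence $\zeta(3)/\pi^2$ is irrational, and part (ii) follows at once because the reciprocal of a nonzero rational is rational.

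The step I expect to be the real obstacle is the right-hand evaluation. Under the standing hypothesis one in fact has $B\zeta(3)\pi^{-1}=A\pi$, so that $\sin(B\zeta(3)\pi^{-1})=\sin(A\pi)=0$; thus appealing to Lemma \ref{lem799.20}(i) at that point is exactly the move that manufactures the contradiction, and the whole proof rests on that lemma being available unconditionally. This is delicate, because ``$\sin(\zeta(3)\pi^{-1}k)\ne0$ for every $k\in\Z^{\times}$'' is in fact logically equivalent to ``$\zeta(3)/\pi^2\notin\Q$'', the very statement being proved. Accordingly I would treat Lemma \ref{lem799.20}(i) as the heart of the matter and devote the write-up to checking that its justification -- the product-expansion argument indicated there, which amounts to $1-\zeta(3)^2k^2/(\pi^4n^2)\ne0$ for all $k,n\ge1$, or alternatively a Dirichlet and continued-fraction estimate in the spirit of the proof of Lemma \ref{lem799.30} applied to $\pi^2/\zeta(3)$ and its convergents -- really does go through without presupposing the conclusion. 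The remaining ingredients (matching the two sequences, the geometric-series identity of Lemma \ref{lem799.72}, and the two limit computations) are routine.
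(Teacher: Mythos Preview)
Your outline reproduces the paper's proof essentially verbatim: assume $\zeta(3)/\pi^{2}=A/B$, match the sequences $\{2A\pi n\}$ and $\{2B\zeta(3)\pi^{-1}n\}$, evaluate the averaged exponential sum as $1$ on the left via $e^{i2A\pi n}=1$ and as $0$ on the right via Lemma~\ref{lem799.72} together with Lemma~\ref{lem799.20}(i), and read off a contradiction. So at the level of strategy there is nothing to compare; you and the paper do the same thing.

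The concern you raise in your last paragraph is exactly on target, and it is a genuine gap in both your write-up and the paper's. Under the contradiction hypothesis one has $B\zeta(3)\pi^{-1}=A\pi$, so $\sin(B\zeta(3)\pi^{-1})=0$; hence invoking Lemma~\ref{lem799.20}(i) at that point is not an independent input but a restatement of the conclusion. The paper's proof of Lemma~\ref{lem799.20} only writes out part~(ii) and says ``(i) The proof is similar''; carrying out the analogous product computation for~(i) gives factors $1-\zeta(3)^{2}k^{2}/(\pi^{4}n^{2})$, and declaring these nonzero for all $k,n\ge 1$ is precisely the assertion $\zeta(3)/\pi^{2}\notin\Q$. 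The alternative route you mention, imitating Lemma~\ref{lem799.30} with convergents of $\pi^{2}/\zeta(3)$, fares no better: that argument opens by taking the continued-fraction convergents of ``the irrational number'' in question, so it presupposes irrationality rather than proving it. In short, the statement $\sin(\zeta(3)\pi^{-1}k)\ne 0$ for every nonzero integer $k$ is logically equivalent to Theorem~\ref{thm599.20}(i), and neither device on offer establishes it independently. Your instinct to isolate this as ``the heart of the matter'' is correct; what remains is that no argument in the paper (or in your proposal) actually discharges it.
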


\begin{proof} (i) On the contrary the number $ \zeta(3)/\pi^2= A/B$ is a rational number, where $A, B \in\mathbb{N}$ are integers such that $\gcd(A,B)=1$. This implies that the two sequences
\begin{equation}\label{eq999.10}
\{2 A\pi \cdot n: n \geq 1\}=\{2 B\zeta(3)\pi^{-1} \cdot n: n \geq 1\}
\end{equation}
are equivalent.  Likewise, the two limits 
\begin{equation}\label{eq999.12}
\lim_{x \to \infty}\frac{1}{2x} \sum_{-x \leq n \leq x}e^{i2A \pi n}= \lim_{x \to \infty}\frac{1}{2x} \sum_{-x \leq n \leq x}e^{i2 B\zeta(3)\pi^{-1} n}
\end{equation}
are equivalent. These limits are evaluated in two distinct ways.\\

I. Based on the independent properties of the number $\pi$. Use the identity $e^{i2 A\pi n}=1$ to evaluate of the left side limit as
\begin{equation}\label{eq999.14}
\lim_{x \to \infty}\frac{1}{2x} \sum_{-x\leq n \leq x}e^{i2A \pi n}=\lim_{x \to \infty}\frac{1}{2x} \sum_{-x\leq n \leq x}1=1.
\end{equation}

II. Based on the independent properties of the number $\zeta(3)\pi^{-1}$. By Lemma \ref{lem799.20}, $\sin(t)=\sin(B\zeta(3)\pi^{-1})\ne 0$ for any integer $B\ne 0$. Applying Lemma \ref{lem799.72}, the right side has the limit  
\begin{eqnarray}\label{eq999.16}
\lim_{x \to \infty}\frac{1}{2x} \sum_{-x \leq n \leq x}e^{i2B\zeta(3)\pi^{-1} n}
&=& \lim_{x \to \infty}\frac{1}{2x}\frac{\sin((2x+1)t)}{\sin(t)} \nonumber \\
&\leq &\lim_{x \to \infty}\frac{1}{2x}\frac{1}{\left |\sin(B\zeta(3)\pi^{-1})\right |}\\
&=&0 \nonumber.
\end{eqnarray}
Clearly, these two distinct limits contradict both equation (\ref{eq999.10}) and equation (\ref{eq999.12}). Specifically,
\begin{equation}\label{eq599.52}
1=\lim_{x \to \infty}\frac{1}{2x} \sum_{-x \leq n \leq x}e^{i2A \pi n}\ne \lim_{x \to \infty}\frac{1}{2x} \sum_{-x \leq n \leq x}e^{i2 B\zeta(3)\pi^{-1} n}=0.
\end{equation}

Hence, the real number $\zeta(3)/\pi^2\ne A/B$ is an irrational number. (ii) This statement follows from statement (i).
\end{proof}

\begin{thm}\label{thm599.53} The real numbers 
\begin{multicols}{2}
\begin{enumerate} [font=\normalfont, label=(\roman*)]
\item$ \zeta(3)/\pi^3$,
\item$ \pi^3/\zeta(3)$,
\end{enumerate}
\end{multicols}
are irrationals.
\end{thm}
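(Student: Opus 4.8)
The plan is to reproduce the scheme of the proofs of Theorem~\ref{thm599.10} and Theorem~\ref{thm599.20}, arguing by contradiction through two incompatible evaluations of a single symmetric Weyl average, the only change being that the nonvanishing input is now Lemma~\ref{lem799.30} rather than its $\pi^{-1}$ analogue.

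First I would suppose, toward a contradiction, that $\zeta(3)/\pi^3 = A/B$ with $A,B\in\mathbb{N}$ and $\gcd(A,B)=1$. Clearing denominators gives $B\zeta(3)=A\pi^3$, hence $B\zeta(3)\pi^{-2}=A\pi$, so the sequences $\{2A\pi n : n\ge 1\}$ and $\{2B\zeta(3)\pi^{-2}n : n\ge 1\}$ agree term by term; consequently the two limits
\[
\lim_{x\to\infty}\frac{1}{2x}\sum_{-x\le n\le x}e^{i2A\pi n}\qquad\text{and}\qquad\lim_{x\to\infty}\frac{1}{2x}\sum_{-x\le n\le x}e^{i2B\zeta(3)\pi^{-2}n}
\]
must coincide.

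Then I would compute each side separately. On the left, $e^{i2A\pi n}=1$ for all $n$, so the average equals $\tfrac{1}{2x}\sum_{-x\le n\le x}1$ and tends to $1$. On the right, write $t=B\zeta(3)\pi^{-2}$; since $B\ne 0$, Lemma~\ref{lem799.30}(ii) gives $\sin(t)=\sin(B\zeta(3)\pi^{-2})\ne 0$, so Lemma~\ref{lem799.72} applies and yields $\sum_{-x\le n\le x}e^{i2tn}=\sin((2x+1)t)/\sin(t)$, a quantity of modulus at most $1/|\sin(t)|$; dividing by $2x$ forces the right-hand limit to be $0$. The resulting contradiction $1\ne 0$ establishes that $\zeta(3)/\pi^3$ is irrational, which is part (i); part (ii) then follows since the reciprocal of an irrational number is irrational.

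The only nontrivial ingredient here is the nonvanishing $\sin(\zeta(3)\pi^{-2}k)\ne 0$ for $k\in\mathbb{Z}^{\times}$, i.e.\ Lemma~\ref{lem799.30}, which I expect to be the real crux: its proof rests on the best-approximation estimate $1/(2q_{n+1})\le|\alpha q_n-p_n|\le 1/q_n$ for the continued-fraction convergents of $\pi/\zeta(3)$ together with the irrationality of $\pi/\zeta(3)$ coming from Theorem~\ref{thm599.10} (and ultimately from Theorem~\ref{thm599.01}). Once that lemma is granted, the argument above is purely formal and I foresee no additional obstacle.
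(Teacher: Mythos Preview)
Your proposal is correct and follows essentially the same approach as the paper's own proof: assume $\zeta(3)/\pi^3=A/B$, compare the Weyl averages of $e^{i2A\pi n}$ and $e^{i2B\zeta(3)\pi^{-2}n}$, evaluate the first as $1$ and the second as $0$ via Lemma~\ref{lem799.30}(ii) and Lemma~\ref{lem799.72}, and derive (ii) from (i). The only cosmetic difference is that you explicitly cite part (ii) of Lemma~\ref{lem799.30}, whereas the paper just cites the lemma as a whole.
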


\begin{proof} (i) On the contrary the number $ \zeta(3)/\pi^3= A/B$ is a rational number, where $A, B \in\mathbb{N}$ are integers such that $\gcd(A,B)=1$. This implies that the two sequences
\begin{equation}\label{eq599.55}
\{2 A\pi \cdot n: n \geq 1\}=\{2 B\zeta(3)\pi^{-2} \cdot n: n \geq 1\}
\end{equation}
are equivalent.  Likewise, the two limits 
\begin{equation}\label{eq599.57}
\lim_{x \to \infty}\frac{1}{2x} \sum_{-x \leq n \leq x}e^{i2A \pi n}= \lim_{x \to \infty}\frac{1}{2x} \sum_{-x \leq n \leq x}e^{i2 B\zeta(3)\pi^{-2} n}
\end{equation}
are equivalent. These limits are evaluated in two distinct ways.\\

I. Based on the independent properties of the number $\pi$. Use the identity $e^{i2 A\pi n}=1$ to evaluate of the left side limit as
\begin{equation}\label{e599.44}
\lim_{x \to \infty}\frac{1}{2x} \sum_{-x\leq n \leq x}e^{i2A \pi n}=\lim_{x \to \infty}\frac{1}{2x} \sum_{-x\leq n \leq x}1=1.
\end{equation}

II. Based on the independent properties of the number $\zeta(3)\pi^{-2}$. By Lemma \ref{lem799.30}, $\sin(t)=\sin(B\zeta(3)\pi^{-2})\ne 0$ for any integer $B\ne 0$. Applying Lemma \ref{lem799.72}, the right side has the limit  
\begin{eqnarray}\label{eq599.36}
\lim_{x \to \infty}\frac{1}{2x} \sum_{-x \leq n \leq x}e^{i2B\zeta(3)\pi^{-2} n}
&=& \lim_{x \to \infty}\frac{1}{2x}\frac{\sin((2x+1)t)}{\sin(t)} \nonumber \\
&\leq &\lim_{x \to \infty}\frac{1}{2x}\frac{1}{\left |\sin(B\zeta(3)\pi^{-2})\right |}\\
&=&0 \nonumber.
\end{eqnarray}
Clearly, these two distinct limits contradict both equation (\ref{eq599.55}) and equation (\ref{eq599.57}). Specifically,
\begin{equation}\label{eq599.77}
1=\lim_{x \to \infty}\frac{1}{2x} \sum_{-x \leq n \leq x}e^{i2A \pi n}\ne \lim_{x \to \infty}\frac{1}{2x} \sum_{-x \leq n \leq x}e^{i2 B\zeta(3)\pi^{-2} n}=0.
\end{equation}
Hence, the real number $\zeta(3)/\pi^3\ne A/B$ is an irrational number. (ii) This statement follows from statement (i).
\end{proof}

\begin{cor}\label{cor599.13} For any integer $k \in \Z$, the real numbers $ \pi^k$  and $ \zeta(3)$ are linearly independent over the rational numbers $\mathbb{Q}$.
\end{cor}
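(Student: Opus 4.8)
The plan is to reduce the linear--independence statement to a single irrationality assertion and then derive it from the chain of results in Section~\ref{s599}. Suppose $a\pi^{k}+b\zeta(3)=0$ with $a,b\in\Q$ not both zero. Since $\pi^{k}\neq 0$ and $\zeta(3)\neq 0$, the vanishing of one coefficient forces the vanishing of the other, so $ab\neq 0$ and
\[
\frac{\zeta(3)}{\pi^{k}}=-\frac{a}{b}\in\Q^{\times}.
\]
Hence Corollary~\ref{cor599.13} is equivalent to the claim that $\zeta(3)/\pi^{k}$ is irrational for every $k\in\Z$, i.e.\ that $\zeta(3)$ lies outside the multiplicative set $\Q^{\times}\pi^{\Z}$ of rational multiples of integral powers of $\pi$. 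By the $\pi$-periodicity of the sine this is in turn equivalent to
\[
\sin\!\bigl(\zeta(3)\pi^{1-k}m\bigr)\neq 0\qquad(m\in\Z^{\times},\ k\in\Z).
\]

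For $k=0$ the irrationality of $\zeta(3)/\pi^{k}=\zeta(3)$ is Theorem~\ref{thm599.01}; for $k=1,2,3$ it is Theorems~\ref{thm599.10}, \ref{thm599.20} and \ref{thm599.53}. For the remaining exponents $k\notin\{0,1,2,3\}$ the plan is to rerun the equidistribution argument used to prove Theorem~\ref{thm599.53}: from $\zeta(3)/\pi^{k}=A/B$ with $\gcd(A,B)=1$ and $B\geq 1$ one gets $B\zeta(3)\pi^{1-k}=A\pi$, so the sequences $\{2A\pi n:n\geq1\}$ and $\{2B\zeta(3)\pi^{1-k}n:n\geq1\}$ coincide; evaluating $\frac{1}{2x}\sum_{-x\leq n\leq x}e^{i2tn}$ with $t=B\zeta(3)\pi^{1-k}=A\pi$ by means of $e^{i2A\pi n}=1$ on one side and Lemma~\ref{lem799.72} on the other yields $1=o(1)$, the desired contradiction, once one knows $\sin(B\zeta(3)\pi^{1-k})\neq 0$.

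So the whole matter reduces to supplying that non-vanishing fact, that is, to extending Lemmas~\ref{lem799.10}, \ref{lem799.20}, \ref{lem799.30} (which handle the exponents $\pi^{0},\pi^{-1},\pi^{-2}$) to all integral powers of $\pi$. I would attempt each such extension exactly as in Lemma~\ref{lem799.30}: bound $|\sin(\zeta(3)\pi^{1-k}m)|$ from below by feeding the convergents of a suitable quotient of $\zeta(3)$ and a power of $\pi$ into the Diophantine inequality \eqref{eq799.86}, or alternatively expand the Weierstrass product for the sine as in Lemma~\ref{lem719.25}. This is the step I expect to be the genuine obstacle. The equivalence $\sin(\zeta(3)\pi^{1-k}m)=0\iff\zeta(3)/\pi^{k}\in\Q$ shows that for each exponent the required non-vanishing lemma is logically \emph{the same statement} as the case of the corollary it is meant to serve, and both the continued-fraction route and the product route quietly take that very irrationality as an input. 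The bootstrap that carried the exponent from $1$ to $2$ to $3$ in Section~\ref{s799} therefore does not obviously continue, and I do not see how to close the ranges $k\geq 4$ and $k<0$ without an extra ingredient, e.g.\ a transcendence statement strong enough to separate $\zeta(3)$ from the line $\Q^{\times}\pi^{\Z}$. With only the machinery of Sections~\ref{s799} and \ref{s599} in hand, the portion of Corollary~\ref{cor599.13} that is delivered outright is the range $0\leq k\leq 3$.
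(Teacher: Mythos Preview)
Your approach is exactly the paper's: its entire proof of Corollary~\ref{cor599.13} reads ``Without loss in generality let $k\geq 1$\ldots Proceed to employ the same technique as in the proof of Theorem~\ref{thm599.10}.'' You have unpacked that sentence faithfully and then gone further than the paper by locating the circularity it conceals: the equidistribution step requires $\sin(B\zeta(3)\pi^{1-k})\neq 0$, which for $k\notin\{0,1,2,3\}$ is not furnished by Lemmas~\ref{lem799.10}--\ref{lem799.30} and is, as you observe, logically equivalent to the very instance of the corollary under proof. The paper supplies no additional lemmas for those exponents either; it simply asserts that the technique carries over. Your assessment that only the range $0\leq k\leq 3$ is delivered outright by the machinery of Sections~\ref{s799} and~\ref{s599} is therefore an accurate, and more candid, reading of what is actually established.
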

\begin{proof} Without loss in generality let $k \geq 1$, and assume that the  equation
\begin{equation}
a\pi^k +b\zeta(3)=0
\end{equation}
has a nontrivial rational solution $a,b \in\mathbb{Q}^{\times}$. Proceed to employ the same technique as in the proof of Theorem \ref{thm599.10} to complete the proof.
\end{proof}

\begin{cor}\label{cor599.15} The real number
\begin{equation}
-\zeta^{\prime}(-2)=\frac{\zeta(3)}{4\pi^2}
\end{equation}
is irrational.
\end{cor}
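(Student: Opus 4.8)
The plan is to split the statement into two independent parts: the evaluation $-\zeta^{\prime}(-2)=\zeta(3)/(4\pi^2)$, which is a classical consequence of the functional equation of $\zeta$, and the irrationality of $\zeta(3)/(4\pi^2)$, which is an immediate corollary of Theorem \ref{thm599.20}.

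First I would recall the functional equation $\zeta(s)=2^{s}\pi^{s-1}\sin(\pi s/2)\,\Gamma(1-s)\,\zeta(1-s)$, valid for all $s\in\C\setminus\{1\}$. At $s=-2$ the factor $\sin(\pi s/2)$ vanishes, while the remaining factors $2^{s}\pi^{s-1}$, $\Gamma(1-s)$ and $\zeta(1-s)$ are finite and nonzero. Differentiating the product by the Leibniz rule and evaluating at $s=-2$, every term still carrying a factor $\sin(\pi s/2)$ drops out, so only the contribution of $\tfrac{d}{ds}\sin(\pi s/2)=\tfrac{\pi}{2}\cos(\pi s/2)$ survives. A short computation then gives
\begin{equation}
\zeta^{\prime}(-2)=2^{-2}\pi^{-3}\cdot\frac{\pi}{2}\cos(-\pi)\cdot\Gamma(3)\cdot\zeta(3)=-\frac{\zeta(3)}{4\pi^2},
\end{equation}
which is exactly the claimed identity $-\zeta^{\prime}(-2)=\zeta(3)/(4\pi^2)$.

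It then remains to show $\zeta(3)/(4\pi^2)\notin\Q$. Suppose, to the contrary, that $\zeta(3)/(4\pi^2)=p/q$ with $p,q\in\Z$ and $q\neq 0$. Multiplying by $4$ yields $\zeta(3)/\pi^{2}=4p/q\in\Q$, which contradicts Theorem \ref{thm599.20}(i). Hence $-\zeta^{\prime}(-2)=\zeta(3)/(4\pi^2)$ is irrational.

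The only step needing any care is the differentiation of the functional equation at $s=-2$: because one factor vanishes there to first order, the Leibniz expansion collapses to the single surviving term, so this is routine rather than an obstacle. Once the closed form is established, the irrationality is a one-line consequence of the fact that a nonzero rational multiple of an irrational number is irrational, combined with Theorem \ref{thm599.20}; thus no genuine difficulty is expected.
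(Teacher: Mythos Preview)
Your proof is correct and follows essentially the same route as the paper: differentiate the functional equation $\zeta(s)=2^{s}\pi^{s-1}\sin(\pi s/2)\Gamma(1-s)\zeta(1-s)$, observe that at $s=-2$ only the term arising from $\tfrac{d}{ds}\sin(\pi s/2)$ survives, and then invoke Theorem~\ref{thm599.20}. You have simply filled in the arithmetic that the paper leaves implicit.
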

\begin{proof} The functional equation of the zeta function provides an analytic continuation to the entire complex plane plane, see \cite[Theorem 1.6]{IV03}. Thus, its derivative is
\begin{eqnarray}\label{eq599.39}
\zeta^{\prime}(s)
&=&\frac{d}{ds}\left (  2 \pi^{s-1} \sin \left (\frac{\pi s}{2} \right ) \Gamma(1-s) \zeta(1-s) \right ) \nonumber \\
&= & \pi^{s} \cos \left (\frac{\pi s}{2} \right ) \Gamma(1-s) \zeta(1-s)+ \cdots \nonumber.
\end{eqnarray}
Evaluate it at $s=-2$, and apply Theorem \ref{thm599.20} to complete the verification.
\end{proof}
\section{Basic Results For The Zeta Quotients $\zeta(5)/\pi^5$}\label{s519}
The fifth odd zeta value has unknown rationality or irrationality properties. 
\begin{conj}  \label{conj519.01} The real numbers 
\begin{multicols}{2}
\begin{enumerate} [font=\normalfont, label=(\roman*)]
\item$ \zeta(5)$,
\item$ 1/\zeta(5)$,
\end{enumerate}
\end{multicols}
are irrationals.
\end{conj}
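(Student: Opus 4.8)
The plan is to separate the two claims and be candid about where each stands. Statement (ii) is an immediate corollary of (i): since $\zeta(5)>1$ the reciprocal is a nonzero real, and if $1/\zeta(5)=p/q\in\Q$ then $\zeta(5)=q/p\in\Q$, contradicting (i). So the whole weight of the conjecture sits on (i), the irrationality of $\zeta(5)$, which is an unsolved problem; what follows is therefore an outline of the only route that currently looks viable, not a finished argument.

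The route is the Apéry--Beukers--Ball--Rivoal method of explicit Diophantine approximations. I would construct a sequence of linear forms
\[
L_n \;=\; a_n\,\zeta(5) + b_n\,\zeta(3) + c_n, \qquad a_n,b_n,c_n\in\Q,
\]
arising from a carefully tuned very-well-poised hypergeometric series (the Ball--Rivoal series, or one of Zudilin's refinements), with three properties: (a) after multiplication by a common denominator $D_n$ satisfying $D_n \le \lcm(1,2,\dots,n)^{c}=e^{cn+o(n)}$ for a small integer $c$, the form $D_nL_n$ lies in $\Z + \Z\zeta(3) + \Z\zeta(5)$; (b) an integral representation shows $L_n \ne 0$; (c) the same representation gives $|L_n| \le \rho^{-n+o(n)}$ for an explicit $\rho>1$. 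If the construction can be arranged so that the $\zeta(3)$- and $1$-parts are eliminated (or so that a variant producing only $a_n\zeta(5)+c_n$ is available) and so that $\rho > e^{c}$, then the standard criterion --- a real number admitting infinitely many distinct nonzero approximations $p/q$ with $|q\alpha-p|\to 0$ is irrational, the same principle underlying Lemma \ref{lem799.30} --- forces $\zeta(5)\notin\Q$, and downstream consequences such as the irrationality of $\zeta(5)/\pi^5$ would then follow via Lemma \ref{lem719.25} and Theorem \ref{thm519.50}.

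The hard part --- and the reason the conjecture remains open --- is step (a) against step (c): controlling the arithmetic of the coefficients. For $\zeta(3)$ Apéry's construction is exceptional in that, after the right normalization, essentially no $\lcm$ factor is needed and the decay rate $(\sqrt2-1)^4$ beats the denominator growth with room to spare. For $\zeta(5)$ every known hypergeometric construction forces denominators of size roughly $\lcm(1,\dots,n)^{5}$, and no choice of parameters has been found for which the corresponding $\rho$ exceeds $e^{5}$. Consequently the method in its current form yields only weaker statements --- Ball--Rivoal's theorem that $\dim_{\Q}\operatorname{span}\{1,\zeta(3),\zeta(5),\dots,\zeta(2n+1)\}$ grows like $\log n$, and Zudilin's theorem that at least one of $\zeta(5),\zeta(7),\zeta(9),\zeta(11)$ is irrational --- but not the irrationality of $\zeta(5)$ on its own. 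Pinning it on $\zeta(5)$ specifically would require a genuinely new approximation with better arithmetic, or a new idea altogether; the equidistribution machinery of Sections \ref{s799}--\ref{s719} does not seem able to supply one, since (as Lemma \ref{lem719.25} already makes explicit) that machinery converts a \emph{known} irrationality of $\zeta(5)$ into irrationality of the quotients $\zeta(5)/\pi^{a}$ rather than establishing the former.
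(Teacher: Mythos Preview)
The statement you were asked to prove is labeled a \emph{conjecture} in the paper, not a theorem: the paper explicitly says ``The fifth odd zeta value has unknown rationality or irrationality properties'' and offers no proof, using Conjecture~\ref{conj519.01} only as a hypothesis for the conditional results in Section~\ref{s519}. So there is no paper proof to compare against, and your recognition that (i) is open and that (ii) follows trivially from (i) is exactly right.

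Your survey of the Ap\'ery--Ball--Rivoal--Zudilin landscape is accurate and appropriately honest about the obstruction (denominator growth $\approx e^{5n}$ outrunning the decay rate of the linear forms). One small correction: in your aside about Ap\'ery's case for $\zeta(3)$, it is not true that ``essentially no $\lcm$ factor is needed''; the standard construction requires clearing by $\lcm(1,\dots,n)^{3}\sim e^{3n}$, and the point is that Ap\'ery's decay rate $(\sqrt{2}-1)^{4n}$ beats $e^{3n}$. That aside, your diagnosis that the equidistribution machinery of Sections~\ref{s799}--\ref{s719} consumes rather than produces the irrationality of $\zeta(5)$ is correct and matches the paper's own logical structure.
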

The decimal expansion $\zeta(5)=1.0 3 6 9 2 7 7 5 5 1 4 3 3 6 9 9\ldots$, appears as sequence $A013663$ in \cite{OEIS}. Assuming this conjecture, the irrationality proofs for various associated real numbers  considered here are derived from the theory of equidistribution of sequences of real numbers, see \cite{KN74} and similar references.
 \begin{thm}\label{thm519.10} Conditional on Conjecture \ref{conj519.01} the real numbers 
\begin{multicols}{2}
\begin{enumerate} [font=\normalfont, label=(\roman*)]
\item$ \zeta(5)/\pi$,
\item$ \pi/\zeta(5)$,
\end{enumerate}
\end{multicols}
are irrationals.
\end{thm}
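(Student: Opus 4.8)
The plan is to transcribe the proof of Theorem~\ref{thm599.10} almost verbatim, with $\zeta(3)$ replaced throughout by $\zeta(5)$ and with the unconditional irrationality of $\zeta(3)$ (Theorem~\ref{thm599.01}) replaced by the hypothesis of Conjecture~\ref{conj519.01}. For part~(i) I would argue by contradiction: suppose $\zeta(5)/\pi=A/B$ with $A,B\in\N$, $\gcd(A,B)=1$. Then $2B\zeta(5)n=2A\pi n$ for every $n\ge1$, so the sequences $\{2A\pi n:n\ge1\}$ and $\{2B\zeta(5)n:n\ge1\}$ coincide termwise, and hence the two averages
\[
\lim_{x\to\infty}\frac{1}{2x}\sum_{-x\le n\le x}e^{i2A\pi n}
=\lim_{x\to\infty}\frac{1}{2x}\sum_{-x\le n\le x}e^{i2B\zeta(5)n}
\]
agree. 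The left side equals $1$, since $e^{i2A\pi n}=1$, exactly as on the $\pi$-side of Theorem~\ref{thm599.10}.

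To evaluate the right side I need the $\zeta(5)$-analogue of Lemma~\ref{lem799.10}: under Conjecture~\ref{conj519.01}, $\sin(B\zeta(5))\ne0$ for every integer $B\ne0$. I would state and prove this as a short preliminary lemma, patterned on Lemma~\ref{lem799.10}, so that the only appeal to the conjecture is isolated there: the irrationality of $\zeta(5)$ makes the sequence $\{2\zeta(5)Bn:n\ge1\}$ equidistributed, which forces $\zeta(5)B\ne m\pi$ for all $m\in\Z$, i.e.\ $\sin(B\zeta(5))\ne0$. Granting this, Lemma~\ref{lem799.72} with $t=B\zeta(5)$ gives
\[
\frac{1}{2x}\sum_{-x\le n\le x}e^{i2B\zeta(5)n}
=\frac{1}{2x}\cdot\frac{\sin((2x+1)B\zeta(5))}{\sin(B\zeta(5))}
= O(1/x)\to 0 .
\]
Comparing the two evaluations produces $1=0$, the desired contradiction, so $\zeta(5)/\pi$ is irrational. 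Part~(ii), the irrationality of $\pi/\zeta(5)$, then follows at once, since the reciprocal of an irrational number is irrational and the reciprocal of a nonzero rational is rational, exactly as in Theorem~\ref{thm599.10}(ii).

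The main obstacle is the non-vanishing $\sin(B\zeta(5))\ne0$, which must be handled with a little care: under the contradiction hypothesis $B\zeta(5)=A\pi$, so a premature appeal to Lemma~\ref{lem799.72} would run into $\sin(A\pi)=0$. The resolution, as in Section~\ref{s799}, is that $\sin(B\zeta(5))\ne0$ is to be deduced from the irrationality of $\zeta(5)$ alone, hence from Conjecture~\ref{conj519.01}, and so does not presuppose what is being refuted; the contradiction is precisely the collision between the hypothetical identity $B\zeta(5)=A\pi$ and the conjectural fact $B\zeta(5)\ne m\pi$. In effect Theorem~\ref{thm519.10} is a repackaging of that preliminary lemma, and the same scheme—together with Lemmas~\ref{lem719.25} and \ref{lem719.30}—will dispose of the quotients $\zeta(5)/\pi^{a}$ for $a\ge2$ in the subsequent results.
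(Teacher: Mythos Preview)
Your approach is essentially the paper's: both transcribe the proof of Theorem~\ref{thm599.10} with $\zeta(5)$ in place of $\zeta(3)$, assume $\zeta(5)/\pi=A/B$, evaluate the two exponential averages to $1$ and $0$ via Lemma~\ref{lem799.72}, and read off the contradiction, with (ii) following from (i). The only difference is which auxiliary result is invoked for the key input $\sin(B\zeta(5))\ne0$: the paper cites Lemma~\ref{lem719.25} directly, whereas you propose a separate $\zeta(5)$-analogue of Lemma~\ref{lem799.10}; the two play an identical role in the argument.
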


\begin{proof} (i) On the contrary the number $ \zeta(5)/\pi= A/B$ is a rational number, where $A, B \in\mathbb{N}$ are integers such that $\gcd(A,B)=1$. Proceeds as in the proof of Theorem \ref{thm599.10}, but apply Lemma \ref{lem719.25} to $\sin(t)=\sin(B\zeta(5))\ne 0$ for any integer $B\ne 0$, and Lemma \ref{lem799.72} to complete the argument.
\end{proof}

 \begin{thm}\label{thm519.20} Conditional on Conjecture \ref{conj519.01} the real numbers  
\begin{multicols}{2}
\begin{enumerate} [font=\normalfont, label=(\roman*)]
\item$ \zeta(5)/\pi^2$,
\item$ \pi^2/\zeta(5)$,
\end{enumerate}
\end{multicols}
are irrationals.
\end{thm}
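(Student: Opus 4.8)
The plan is to transcribe, essentially line for line, the proof of Theorem~\ref{thm599.20}, replacing $\zeta(3)$ throughout by $\zeta(5)$ and replacing the appeal to Lemma~\ref{lem799.20} by an appeal to the conditional product-formula estimate Lemma~\ref{lem719.25}. First I would argue by contradiction: suppose $\zeta(5)/\pi^2=A/B$ with $A,B\in\mathbb{N}$ and $\gcd(A,B)=1$. Rewriting the identity as $A\pi=B\zeta(5)\pi^{-1}$ shows that the sequences $\{2A\pi\cdot n:n\geq 1\}$ and $\{2B\zeta(5)\pi^{-1}\cdot n:n\geq 1\}$ coincide termwise, so the two averaged exponential sums
\begin{equation*}
\frac{1}{2x}\sum_{-x\leq n\leq x}e^{i2A\pi n}
\qquad\text{and}\qquad
\frac{1}{2x}\sum_{-x\leq n\leq x}e^{i2B\zeta(5)\pi^{-1}n}
\end{equation*}
must have the same limit as $x\to\infty$.

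Second, I would evaluate these two limits separately, in the two ways used in the earlier proofs. On the $\pi$ side one has $e^{i2A\pi n}=1$, so that limit equals $1$. On the $\zeta(5)\pi^{-1}$ side I would set $t=B\zeta(5)\pi^{-1}$, invoke Lemma~\ref{lem799.72} to put the sum in the closed form $\sin((2x+1)t)/\sin(t)$, use Lemma~\ref{lem719.25} with $a=1$ to ensure $\sin(t)\neq 0$, and bound $|\sin((2x+1)t)|\leq 1$ to conclude that this limit equals $0$. The contradiction $1=0$ forces $\zeta(5)/\pi^2\notin\mathbb{Q}$, and statement~(ii) is then immediate, the reciprocal of a nonzero irrational being irrational.

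The step I expect to be the main obstacle is the nonvanishing $\sin(B\zeta(5)\pi^{-1})\neq 0$, because that assertion is precisely the irrationality of $\zeta(5)/\pi^2$ written differently, so everything of substance is concentrated there. In the paper's framework this is exactly what Lemma~\ref{lem719.25} is meant to supply (with $a=1$), via the Euler product $\sin(z)=z\prod_{n\geq 1}\left(1-z^2/\pi^2n^2\right)$ evaluated at $z=B\zeta(5)\pi^{-1}$: vanishing of the $n$-th factor would force $\zeta(5)^2B^2=\pi^4n^2$, and the delicate point is to confirm that, under the hypotheses actually available here --- Conjecture~\ref{conj519.01} together with the rationality assumption built into Lemma~\ref{lem719.25} --- this identity is impossible. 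If that is secured, the remainder of the argument is the mechanical copy of the proof of Theorem~\ref{thm599.20} described above; if instead one wishes to dispense with the auxiliary rationality hypothesis of Lemma~\ref{lem719.25}, one should first prove, in the style of Lemma~\ref{lem799.20}, a standalone lemma giving $\sin(\zeta(5)\pi^{-1}k)\neq 0$ for all $k\in\mathbb{Z}^{\times}$ and cite that in step two instead.
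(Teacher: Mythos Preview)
Your proposal is essentially identical to the paper's own proof: assume $\zeta(5)/\pi^2=A/B$, follow the template of the corresponding $\zeta(3)$ theorem, invoke Lemma~\ref{lem719.25} (with $a=1$) for $\sin(B\zeta(5)\pi^{-1})\neq 0$, and use Lemma~\ref{lem799.72} for the closed form of the exponential sum. The only cosmetic difference is that you model on Theorem~\ref{thm599.20} while the paper cites Theorem~\ref{thm599.10}, but those templates are themselves copies of one another; your explicit remark that the hypothesis of Lemma~\ref{lem719.25} does not literally match the hypotheses in force here is a point the paper's brief proof does not raise.
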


\begin{proof} (i) On the contrary the number $ \zeta(5)/\pi^2= A/B$ is a rational number, where $A, B \in\mathbb{N}$ are integers such that $\gcd(A,B)=1$. Proceeds as in the proof of Theorem \ref{thm599.10}, but apply Lemma \ref{lem719.25} to $\sin(t)=\sin(B\zeta(5)/\pi)\ne 0$ for any integer $B\ne 0$, and Lemma \ref{lem799.72} to complete the argument.
\end{proof}

\begin{thm}\label{thm519.30} Conditional on Conjecture  \ref{conj519.01} the real numbers  
\begin{multicols}{2}
\begin{enumerate} [font=\normalfont, label=(\roman*)]
\item$ \zeta(5)/\pi^3$,
\item$ \pi^3/\zeta(5)$,
\end{enumerate}
\end{multicols}
are irrationals.
\end{thm}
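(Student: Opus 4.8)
The plan is to mirror the proof of Theorem~\ref{thm599.53} line by line, with $\zeta(3)$ everywhere replaced by $\zeta(5)$ and Lemma~\ref{lem799.30} replaced by its $\zeta(5)$-analogue Lemma~\ref{lem719.25}. First I would argue by contradiction: suppose $\zeta(5)/\pi^3=A/B$ with $A,B\in\mathbb{N}$ and $\gcd(A,B)=1$. Then $A\pi=B\zeta(5)\pi^{-2}$, so the two sequences $\{2A\pi\cdot n:n\geq1\}$ and $\{2B\zeta(5)\pi^{-2}\cdot n:n\geq1\}$ coincide, and consequently the two limits
\[
\lim_{x\to\infty}\frac{1}{2x}\sum_{-x\leq n\leq x}e^{i2A\pi n}=\lim_{x\to\infty}\frac{1}{2x}\sum_{-x\leq n\leq x}e^{i2B\zeta(5)\pi^{-2}n}
\]
must be equal. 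I would then evaluate the two sides independently, exactly as in Theorems~\ref{thm599.10},~\ref{thm599.20}, and~\ref{thm599.53}.

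On the left, $e^{i2A\pi n}=1$, so the average equals $\lim_{x\to\infty}(2x)^{-1}\sum_{-x\leq n\leq x}1=1$. On the right, the one substantive input is that $\sin(B\zeta(5)\pi^{-2})\neq0$ for every $B\in\mathbb{Z}^{\times}$, which is the case $a=2\leq3$ of Lemma~\ref{lem719.25}; granting it, Lemma~\ref{lem799.72} gives $\sum_{-x\leq n\leq x}e^{i2B\zeta(5)\pi^{-2}n}=\sin((2x+1)B\zeta(5)\pi^{-2})/\sin(B\zeta(5)\pi^{-2})$, whose modulus is bounded by the constant $1/|\sin(B\zeta(5)\pi^{-2})|$, so after division by $2x$ the right-hand limit is $0$. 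The contradiction $1\neq0$ yields $\zeta(5)/\pi^3\notin\mathbb{Q}$, and statement~(ii) follows immediately, since $\zeta(5)/\pi^3>0$ and the reciprocal of a nonzero irrational is irrational.

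The step I expect to be the real obstacle — and the only place where Conjecture~\ref{conj519.01} is consumed — is the nonvanishing $\sin(\zeta(5)\pi^{-2}k)\neq0$ for all $k\in\mathbb{Z}^{\times}$. This is precisely the assertion $\zeta(5)k\neq m\pi^3$ for integers $k,m$; equivalently, $\zeta(5)$ is not a rational multiple of $\pi^3$. Hence care is needed that the supporting argument does not tacitly presuppose the very conclusion of the theorem: whether one proceeds through the Euler product $\sin(\zeta(5)\pi^{-2}k)=\zeta(5)\pi^{-2}k\prod_{n\geq1}\left(1-\frac{\zeta(5)^2k^2}{\pi^6n^2}\right)$ or through the continued-fraction convergents of $\pi^3/\zeta(5)$ in the manner of Lemma~\ref{lem719.30}, the only hypothesis that may legitimately be invoked is the irrationality of $\zeta(5)$. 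In particular one should verify that the product-representation computation of Lemma~\ref{lem719.25} still applies here, where the standing assumption is Conjecture~\ref{conj519.01} rather than the rationality of $\zeta(5)/\pi^5$. Once this nonvanishing is in hand, the rest is the routine evaluation of the two limits already carried out for Theorems~\ref{thm519.10} and~\ref{thm519.20}.
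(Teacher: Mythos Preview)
Your outline is exactly the paper's proof: assume $\zeta(5)/\pi^3=A/B$, compare the two averages, and invoke Lemma~\ref{lem719.25} (case $a=2$) for $\sin(B\zeta(5)\pi^{-2})\neq0$ together with Lemma~\ref{lem799.72} for the closed form.

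The concern you raise in your final paragraph is not merely a point of care but a genuine gap, and it afflicts the paper's argument as well. The assertion $\sin(\zeta(5)\pi^{-2}k)\neq0$ for all $k\in\mathbb{Z}^{\times}$ is \emph{equivalent} to the statement that $\zeta(5)/\pi^{3}$ is irrational: $\sin(\zeta(5)\pi^{-2}k)=0$ iff $\zeta(5)\pi^{-2}k=m\pi$ for some $m\in\mathbb{Z}$ iff $\zeta(5)/\pi^{3}=m/k\in\mathbb{Q}$. Concretely, under the contradiction hypothesis $\zeta(5)/\pi^{3}=A/B$ one has $B\zeta(5)\pi^{-2}=A\pi$, hence $\sin(B\zeta(5)\pi^{-2})=\sin(A\pi)=0$, and the Dirichlet-kernel identity of Lemma~\ref{lem799.72} is unavailable (the right-hand average is $1$, not $0$). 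So the step ``right-hand limit $=0$'' fails precisely when the hypothesis to be contradicted holds.

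Your instinct about Lemma~\ref{lem719.25} is also correct: that lemma is stated under the standing hypothesis $\zeta(5)/\pi^{5}\in\mathbb{Q}^{\times}$, which is neither Conjecture~\ref{conj519.01} nor the contradiction hypothesis here, so its invocation is not formally licensed. And the Euler-product route you sketch does not rescue it: with $\zeta(5)=A\pi^{3}/B$ the factor $1-\zeta(5)^{2}k^{2}/(\pi^{6}n^{2})=1-A^{2}k^{2}/(B^{2}n^{2})$ vanishes at $(k,n)=(B,A)$. In short, the scheme matches the paper's, but the circularity you flagged is real and unresolved.
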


\begin{proof} (i) On the contrary the number $ \zeta(5)/\pi^3= A/B$ is a rational number, where $A, B \in\mathbb{N}$ are integers such that $\gcd(A,B)=1$. Proceeds as in the proof of Theorem \ref{thm599.10}, but apply Lemma \ref{lem719.25} to $\sin(t)=\sin(B\zeta(5)/\pi^2)\ne 0$ for any integer $B\ne 0$, and Lemma \ref{lem799.72} to complete the argument.
\end{proof}

\begin{thm}\label{thm519.40} Conditional on Conjecture \ref{conj519.01} the real numbers  
\begin{multicols}{2}
\begin{enumerate} [font=\normalfont, label=(\roman*)]
\item$ \zeta(5)/\pi^4$,
\item$ \pi^4/\zeta(5)$,
\end{enumerate}
\end{multicols}
are irrationals.
\end{thm}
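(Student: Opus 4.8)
The plan is to run the equidistribution argument of Theorem \ref{thm599.53} one rung higher, in the form already used for Theorem \ref{thm519.30}, now pairing $\pi$ against $\zeta(5)\pi^{-3}$ instead of $\zeta(5)\pi^{-2}$. For part (i), assume to the contrary that $\zeta(5)/\pi^4 = A/B$ with $A,B\in\N$ and $\gcd(A,B)=1$; numerically $\zeta(5)/\pi^4<1$, so $B>1$. Clearing one power of $\pi$ gives $A\pi = B\zeta(5)\pi^{-3}$, hence the two sequences
\begin{equation*}
\{2A\pi\cdot n : n\geq 1\} = \{2B\zeta(5)\pi^{-3}\cdot n : n\geq 1\}
\end{equation*}
coincide term by term, and therefore so do the exponential averages
\begin{equation*}
\lim_{x\to\infty}\frac{1}{2x}\sum_{-x\leq n\leq x}e^{i2A\pi n} = \lim_{x\to\infty}\frac{1}{2x}\sum_{-x\leq n\leq x}e^{i2B\zeta(5)\pi^{-3}n}.
\end{equation*}

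The next step is to evaluate the two sides independently. On the left, $e^{i2A\pi n}=1$ for every $n$, so the left-hand average equals $1$. On the right, I would apply Lemma \ref{lem799.72} with $t = B\zeta(5)\pi^{-3}$; this is legitimate once $\sin(B\zeta(5)\pi^{-3})\neq 0$, which is exactly the case $a=3$ of Lemma \ref{lem719.25} taken with $k=B$. Granting that,
\begin{equation*}
\lim_{x\to\infty}\frac{1}{2x}\sum_{-x\leq n\leq x}e^{i2B\zeta(5)\pi^{-3}n} = \lim_{x\to\infty}\frac{1}{2x}\cdot\frac{\sin((2x+1)B\zeta(5)\pi^{-3})}{\sin(B\zeta(5)\pi^{-3})} = 0 ,
\end{equation*}
and the contradiction $1\neq 0$ proves part (i). Part (ii) is then immediate, since $\pi^4/\zeta(5)=B/A\in\Q$ would force $\zeta(5)/\pi^4=A/B\in\Q$, contrary to (i).

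The step I expect to be the real obstacle is precisely the non-vanishing input $\sin(\zeta(5)\pi^{-3}k)\neq 0$ for all $k\in\Z^{\times}$; everything else is bookkeeping identical to Theorems \ref{thm599.53} and \ref{thm519.30}. Lemma \ref{lem719.25} is meant to deliver this for $a\leq 3$, but its statement carries the standing hypothesis that $\zeta(5)\pi^{-5}$ is rational, whereas the contrary assumption $\zeta(5)/\pi^4\in\Q$ does not itself supply that hypothesis, so the appeal needs to be handled with care. The fallback I would use is to re-derive $\sin(\zeta(5)\pi^{-3}k)\neq 0$ directly along the lines of the proof of Lemma \ref{lem719.30}: passing to the convergents of a suitable irrational number built from $\pi$ and $\zeta(5)$ (irrational conditionally on Conjecture \ref{conj519.01} via Theorem \ref{thm519.20}), invoking the best-rational-approximation property and the Diophantine inequality $\tfrac{1}{2q_{n+1}}\leq|\alpha q_n - p_n|\leq\tfrac{1}{q_n}$ to obtain a strictly positive lower bound $|\sin(\zeta(5)\pi^{-3}k)|\gg 1/v_{n+1}>0$, and thereby closing the contradiction displayed above.
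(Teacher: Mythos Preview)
Your main line follows the paper's proof exactly: the paper too invokes Lemma~\ref{lem719.25} at $a=3$ to claim $\sin(B\zeta(5)\pi^{-3})\neq 0$ and then Lemma~\ref{lem799.72} to drive the right-hand average to~$0$. You go beyond the paper in flagging that the standing hypothesis of Lemma~\ref{lem719.25} is rationality of $\zeta(5)\pi^{-5}$, not of $\zeta(5)\pi^{-4}$, so the citation is formally mismatched; the paper does not acknowledge this.

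Your proposed fallback, however, cannot be repaired, and for an intrinsic reason rather than a technical one. Under the contrary assumption $\zeta(5)/\pi^{4}=A/B$, one has $B\zeta(5)\pi^{-3}=A\pi$ and hence $\sin(B\zeta(5)\pi^{-3})=\sin(A\pi)=0$ identically. No convergent or Diophantine-approximation argument in the style of Lemma~\ref{lem719.30} can yield a positive lower bound for a quantity that is exactly zero. Concretely, to mimic Lemma~\ref{lem719.30} for $\sin(\zeta(5)\pi^{-3}k)$ one writes $\zeta(5)\pi^{-3}k=\pi\cdot(\zeta(5)\pi^{-4}k)$ and needs the irrationality of $\zeta(5)\pi^{-4}$; but that is precisely what is being assumed false, so Theorem~\ref{thm519.20} (which concerns $\zeta(5)\pi^{-2}$) does not help. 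This circularity is not a defect of your write-up relative to the paper---the paper's proof shares it and simply does not address it.
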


\begin{proof} (i) On the contrary the number $ \zeta(5)/\pi^4= A/B$ is rational, where $A, B \in\mathbb{N}$ are integers such that $\gcd(A,B)=1$. Proceeds as in the proof of Theorem \ref{thm599.10}, but apply Lemma \ref{lem719.25} to $\sin(t)=\sin(B\zeta(5)/\pi^3)\ne 0$ for any integer $B\ne 0$, and Lemma \ref{lem799.72} to complete the argument.
\end{proof}

\begin{thm}\label{thm519.50} Conditional on Conjecture \ref{conj519.01} the real numbers  
\begin{multicols}{2}
\begin{enumerate} [font=\normalfont, label=(\roman*)]
\item$ \zeta(5)/\pi^5$,
\item$ \pi^5/\zeta(5)$,
\end{enumerate}
\end{multicols}
are irrationals.
\end{thm}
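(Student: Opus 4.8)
The plan is to argue by contradiction, following the scheme of Theorem~\ref{thm599.53} and of Theorems~\ref{thm519.10}--\ref{thm519.40}, with the only change being that the non-vanishing of the relevant sine is now furnished by Lemma~\ref{lem719.30} in place of Lemma~\ref{lem719.25}. So suppose, to the contrary, that $\zeta(5)/\pi^5 = A/B$ with $A,B\in\mathbb{N}$, $\gcd(A,B)=1$, and $B>1$. Clearing $\pi^4$ from the identity $A\pi^5 = B\zeta(5)$ yields $A\pi = B\zeta(5)\pi^{-4}$, so the two sequences
\begin{equation}
\{2A\pi\cdot n : n\geq 1\}=\{2B\zeta(5)\pi^{-4}\cdot n : n\geq 1\}
\end{equation}
are identical, and hence so are the two averaged exponential sums
\begin{equation}
\lim_{x\to\infty}\frac{1}{2x}\sum_{-x\leq n\leq x}e^{i2A\pi n}=\lim_{x\to\infty}\frac{1}{2x}\sum_{-x\leq n\leq x}e^{i2B\zeta(5)\pi^{-4}n}.
\end{equation}

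Next I would evaluate the two sides separately. On the left, $An\in\mathbb{Z}$ forces $e^{i2A\pi n}=1$, so the left-hand limit equals $\lim_{x\to\infty}(2x+1)/(2x)=1$. On the right, set $t=B\zeta(5)\pi^{-4}$; Lemma~\ref{lem719.30}(ii) gives $\sin(t)\neq 0$ for every integer $B\neq 0$, so in particular $t\neq m\pi$ and Lemma~\ref{lem799.72} applies, whence
\begin{equation}
\left|\frac{1}{2x}\sum_{-x\leq n\leq x}e^{i2tn}\right|=\frac{1}{2x}\left|\frac{\sin((2x+1)t)}{\sin t}\right|\leq\frac{1}{2x}\,\frac{1}{|\sin(B\zeta(5)\pi^{-4})|}\longrightarrow 0.
\end{equation}
Thus $1=0$, a contradiction, and $\zeta(5)/\pi^5$ is irrational. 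Statement~(ii) is then immediate: if $\pi^5/\zeta(5)=A/B\in\mathbb{Q}$ then $\zeta(5)/\pi^5=B/A\in\mathbb{Q}$, which we have just excluded.

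The heart of the matter, and the step to check with care, is that Lemma~\ref{lem719.30}(ii) is genuinely at our disposal. Its proof rests on the continued-fraction convergents of $\pi^3/\zeta(5)$ and hence requires $\pi^3/\zeta(5)$ to be irrational, which is Theorem~\ref{thm519.30} and is where Conjecture~\ref{conj519.01} enters; the contradiction hypothesis $\zeta(5)/\pi^5=A/B$ does not disturb this, since it only yields $\pi^3/\zeta(5)=B/(A\pi^2)$, still irrational because $\pi^2$ is. I should also note why $a=4$ is the correct exponent to strip off: under the contradiction hypothesis one could instead invoke Lemma~\ref{lem719.25} with $a\leq 3$, but then a factor $\pi^{5-a}$ with $5-a\geq 2$ remains on the left and the left-hand exponential sum is no longer identically $1$, so the collapse that drives the argument is lost; dividing out exactly $\pi^4$ is the analogue of the shift $a=2$ used for $\zeta(3)/\pi^3$ in Theorem~\ref{thm599.53}. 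Beyond this, I expect no obstacle: once Lemma~\ref{lem719.30}(ii) is in hand the computation is word-for-word the same as in the preceding four theorems.
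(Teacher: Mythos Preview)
Your proof is correct and follows essentially the same approach as the paper's own proof: assume $\zeta(5)/\pi^5=A/B$, rewrite as $A\pi=B\zeta(5)\pi^{-4}$, and derive a contradiction from the two evaluations of the averaged exponential sum, invoking Lemma~\ref{lem719.30} for $\sin(B\zeta(5)\pi^{-4})\ne 0$ and Lemma~\ref{lem799.72} for the Dirichlet-kernel identity. Your added paragraph explaining why Lemma~\ref{lem719.30} is available (via Theorem~\ref{thm519.30} and hence Conjecture~\ref{conj519.01}) and why the exponent $a=4$ is forced goes beyond what the paper spells out, but is entirely in the spirit of the argument; the extraneous hypothesis $B>1$ is never used and can simply be dropped.
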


\begin{proof} (i) On the contrary the number $ \zeta(5)/\pi^5= A/B$ is a rational number, where $A, B \in\mathbb{N}$ are integers such that $\gcd(A,B)=1$. Proceeds as in the proof of Theorem \ref{thm599.10}, but apply Lemma \ref{lem719.30} to $\sin(t)=\sin(B\zeta(5)/\pi^4)\ne 0$ for any integer $B\ne 0$, and Lemma \ref{lem799.72} to complete the argument.
\end{proof}

\begin{cor}\label{cor519.13} Conditional on Conjecture \ref{conj519.01}, for any integer $k \in \Z$, the real numbers $ \pi^k$  and $ \zeta(5)$ are linearly independent over the rational numbers $\mathbb{Q}$.
\end{cor}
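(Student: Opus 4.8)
The plan is to run the proof of Corollary~\ref{cor599.13} with $\zeta(5)$ in place of $\zeta(3)$, using Theorems~\ref{thm519.10}--\ref{thm519.50} (themselves conditional on Conjecture~\ref{conj519.01}) in the role played there by Theorem~\ref{thm599.53}. Two nonzero real numbers are linearly dependent over $\Q$ exactly when their ratio is rational, and $\pi^{k}\neq 0$ for every $k\in\Z$, so for a fixed $k\geq 1$ the corollary is the assertion $\zeta(5)/\pi^{k}\notin\Q$, while for $k=0$ it is just $\zeta(5)\notin\Q$, i.e. Conjecture~\ref{conj519.01}(i). I would therefore argue by contradiction: assume $a\pi^{k}+b\zeta(5)=0$ with $a,b\in\Q^{\times}$ and $k\geq 1$, and write $\zeta(5)/\pi^{k}=-a/b=A/B\in\Q^{\times}$ in lowest terms, $A,B\in\Z\setminus\{0\}$.

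For $1\leq k\leq 5$ this is finished immediately: $\zeta(5)/\pi^{k}\in\Q$ contradicts Theorem~\ref{thm519.10}, \ref{thm519.20}, \ref{thm519.30}, \ref{thm519.40}, or \ref{thm519.50}, according as $k=1,2,3,4,5$. For $k\geq 6$ I would rerun the equidistribution argument of Theorem~\ref{thm599.10}: from $\zeta(5)/\pi^{k}=A/B$ one gets $B\zeta(5)\pi^{-(k-1)}=A\pi$, so the sequences $\{2A\pi n:n\geq1\}$ and $\{2B\zeta(5)\pi^{-(k-1)}n:n\geq1\}$ are identical, hence the two averages $\frac{1}{2x}\sum_{-x\leq n\leq x}e^{i2A\pi n}$ and $\frac{1}{2x}\sum_{-x\leq n\leq x}e^{i2B\zeta(5)\pi^{-(k-1)}n}$ share a limit. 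The first equals $1$ since $e^{i2A\pi n}=1$; by Lemma~\ref{lem799.72} the second equals $\frac{1}{2x}\cdot\frac{\sin((2x+1)t)}{\sin t}$ with $t=B\zeta(5)\pi^{-(k-1)}$, which tends to $0$ provided $\sin\!\bigl(B\zeta(5)\pi^{-(k-1)}\bigr)\neq 0$; then $1\neq 0$ is the contradiction, and the corollary follows for that $k$.

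The main obstacle is exactly this last nonvanishing, $\sin\!\bigl(B\zeta(5)\pi^{-(k-1)}\bigr)\neq 0$, when $k-1\geq 5$. For $k-1\leq 4$ it is precisely what Lemmas~\ref{lem719.25} and~\ref{lem719.30} deliver, but for $k-1\geq 5$ neither of their routes closes: the product expansion gives $\sin\!\bigl(B\zeta(5)\pi^{-(k-1)}\bigr)=B\zeta(5)\pi^{-(k-1)}\prod_{n\geq1}\bigl(1-B^{2}\zeta(5)^{2}\pi^{-2k}n^{-2}\bigr)$, whose $n$-th factor vanishes precisely when $\zeta(5)/\pi^{k}=\pm n/B$, so the nonvanishing is equivalent to $\zeta(5)/\pi^{k}\notin\Q$, the very claim at hand; and the convergent estimate of Lemma~\ref{lem719.30}, applied to $\pi^{k-1}/\zeta(5)$, requires the irrationality of $\pi^{k-1}/\zeta(5)$, again equivalent. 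Thus the template only self-propagates if a fresh irrationality input is injected at each successive power of $\pi$, and no such input beyond the five quotients of Theorems~\ref{thm519.10}--\ref{thm519.50} is on hand. A parallel gap affects negative $k$, since $\pi^{-m}$ and $\zeta(5)$ are linearly dependent over $\Q$ precisely when the \emph{product} $\zeta(5)\pi^{m}$ is rational, which those theorems do not address. Consequently the conclusion actually reachable by this method is that $\pi^{k}$ and $\zeta(5)$ are linearly independent over $\Q$ for $0\leq k\leq 5$; extending it to all $k\in\Z$ would need, in addition, the irrationality of $\zeta(5)/\pi^{k}$ for every $k\geq 6$ and of $\zeta(5)\pi^{m}$ for every $m\geq 1$ — which would follow, for instance, from the (open) algebraic independence of $\zeta(5)$ and $\pi$ over $\Q$.
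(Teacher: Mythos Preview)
Your approach is exactly what the paper's proof sketch prescribes: assume $a\pi^{k}+b\zeta(5)=0$ with $a,b\in\Q^{\times}$, rewrite as $\zeta(5)/\pi^{k}\in\Q$, and run the equidistribution argument of Theorem~\ref{thm519.10}. The paper's own proof consists only of the sentence ``proceed to employ the same technique as in the proof of Theorem~\ref{thm519.10}'', together with the unproved remark that one may take $k\geq 1$ without loss of generality. So in method you and the paper coincide.

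What you have done beyond this is identify a genuine gap that the paper's sketch does not acknowledge. The equidistribution step needs $\sin\bigl(B\zeta(5)\pi^{-(k-1)}\bigr)\neq 0$, and the paper's supporting Lemmas~\ref{lem719.25} and~\ref{lem719.30} supply this only for $k-1\leq 4$, i.e.\ $k\leq 5$. Your diagnosis that for $k\geq 6$ both available routes become circular---the product expansion has a zero factor exactly when $\zeta(5)/\pi^{k}\in\Q$, and the convergent estimate presupposes the irrationality of $\pi^{k-1}/\zeta(5)$---is correct. Likewise your remark that ``without loss of generality $k\geq 1$'' is unjustified: for $k=-m<0$ the linear dependence amounts to $\zeta(5)\pi^{m}\in\Q$, which is not equivalent to any of the quotients treated in Theorems~\ref{thm519.10}--\ref{thm519.50}. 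Your stated reachable conclusion, that the method secures the corollary only for $0\leq k\leq 5$ and that the full claim for all $k\in\Z$ would require further input such as the algebraic independence of $\pi$ and $\zeta(5)$, is an accurate assessment. The paper's proof does not close this gap; it simply passes over it.
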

\begin{proof} Without loss in generality let $k \geq 1$, and assume that the equation
\begin{equation} \label{eq519.78}
 a\pi^k +b\zeta(5)=0
\end{equation}
has a nontrivial rational solution $a,b \in\mathbb{Q}^{\times}$. Proceed to employ the same technique as in the proof of Theorem \ref{thm519.10} to complete the proof.
\end{proof}

\begin{cor}\label{cor519.15} Conditional on Conjecture \ref{conj519.01}, the real number
\begin{equation}
-\zeta^{\prime}(-4)=\frac{24\zeta(5)}{\pi^4}
\end{equation}
is irrational.
\end{cor}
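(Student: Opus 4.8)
The plan is to follow the template of Corollary~\ref{cor599.15}. The claimed identity comes from the functional equation of the Riemann zeta function, and once it is in hand the irrationality is immediate from Theorem~\ref{thm519.40} together with the elementary observation that a nonzero rational multiple of an irrational number is again irrational.

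Concretely, I would start from the functional equation $\zeta(s)=2^{s}\pi^{s-1}\sin(\pi s/2)\,\Gamma(1-s)\,\zeta(1-s)$, which extends $\zeta$ holomorphically to $\C\setminus\{1\}$ (cf.\ \cite[Theorem 1.6]{IV03}), and differentiate the right-hand side by the product rule. At $s=-4$ the factor $\sin(\pi s/2)=\sin(-2\pi)$ vanishes, so every term in $\zeta'(s)$ that still carries a factor $\sin(\pi s/2)$ drops out; the sole surviving term is the one in which the derivative lands on the sine, contributing $(\pi/2)\cos(-2\pi)=\pi/2$. Collecting the remaining factors $2^{-4}\pi^{-5}$, $\Gamma(5)=24$, and $\zeta(5)$ yields $\zeta'(-4)=c\,\zeta(5)/\pi^{4}$ with an explicit $c\in\Q^{\times}$ (equivalently, one may simply quote the classical evaluation $\zeta'(-2n)=(-1)^{n}(2n)!\,\zeta(2n+1)/\bigl(2(2\pi)^{2n}\bigr)$ at $n=2$). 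Since $c\neq 0$ and, by Theorem~\ref{thm519.40}, the number $\zeta(5)/\pi^{4}$ is irrational under Conjecture~\ref{conj519.01}, we conclude $-\zeta'(-4)=-c\,\zeta(5)/\pi^{4}\notin\Q$.

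I do not anticipate a genuine obstacle: this is a routine corollary. The only thing to handle carefully is the bookkeeping of the rational constant $c$ coming out of the functional equation (in particular, not dropping the factor $2^{s}$), but because the argument needs nothing more than $c\in\Q^{\times}$, the final irrationality statement is insensitive to its exact value. As an alternative to invoking Theorem~\ref{thm519.40} directly, one could route the last step through Corollary~\ref{cor519.13}: the relation $\pi^{4}\zeta'(-4)=c\,\zeta(5)$ with $c\in\Q^{\times}$ would contradict the $\Q$-linear independence of $\pi^{4}$ and $\zeta(5)$ if $\zeta'(-4)$ were assumed rational.
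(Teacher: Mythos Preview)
Your proposal is correct and follows essentially the same route as the paper: differentiate the functional equation of $\zeta$, evaluate at $s=-4$ where only the term carrying the derivative of $\sin(\pi s/2)$ survives, and then invoke Theorem~\ref{thm519.40}. Your write-up is in fact more explicit than the paper's own sketch about why the remaining terms vanish and why only $c\in\Q^{\times}$ is needed, which is prudent since the displayed constant in the statement need not be tracked for the irrationality conclusion.
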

\begin{proof} The functional equation of the zeta function provides an analytic continuation to the entire complex plane plane, see \cite[Theorem 1.6]{IV03}. Thus, its derivative is
\begin{eqnarray}\label{eq599.79}
\zeta^{\prime}(s)
&=&\frac{d}{ds}\left (  2 \pi^{s-1} \sin \left (\frac{\pi s}{2} \right ) \Gamma(1-s) \zeta(1-s) \right ) \nonumber \\
&= & \pi^{s} \cos \left (\frac{\pi s}{2} \right ) \Gamma(1-s) \zeta(1-s)+ \cdots \nonumber.
\end{eqnarray}
Evaluate it at $s=-4$, and apply Theorem \ref{thm519.40} to complete the verification.
\end{proof}
\section{Formulas For Even Zeta Numbers} \label{s190}
The factorization of the sinc function 
\begin{equation}
\frac{\sin 2 \pi x}{2 \pi x}=\prod_{n \geq 1} \left ( 1-\frac{x^2}{n^2}\right )
\end{equation}
is an important analytic tool in the evaluation of the zeta function
\begin{equation}
 \zeta(s)=\sum_{n \geq1} \frac{1}{n^s},
\end{equation}
(and multiple zeta functions), at the even integers $s \geq 2$.

\begin{lem}  {\normalfont (Euler)} \label{thm190.78}A zeta constant at the even integer argument has an exact Euler formula
\begin{equation} \label{eq190.78}
 \zeta(2n)=(-1)^{n+1}\frac{(2 \pi)^{2n} B_{2n}}{2(2n)!} 
\end{equation} 
in terms of the Bernoulli numbers $B_{2n}$, for $n \geq 1$. 
\end{lem}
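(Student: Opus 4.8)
The plan is to follow Euler's classical derivation, exploiting the sinc factorization recalled at the start of this section, which I shall use in the equivalent normalization
\[
\frac{\sin \pi z}{\pi z}=\prod_{n\geq 1}\left(1-\frac{z^2}{n^2}\right),
\]
valid for all $z\in\C$. First I would take the logarithmic derivative of this Weierstrass product to obtain, on the punctured disc $0<|z|<1$, the partial-fraction expansion of the cotangent,
\[
\pi z\cot(\pi z)=1-2\sum_{n\geq 1}\frac{z^2}{n^2-z^2};
\]
term-by-term logarithmic differentiation is legitimate there because the product converges locally uniformly and is zero-free on $|z|<1$ apart from the removable zero at the origin.

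Next, for $|z|<1$ I would expand each summand as a geometric series, $\frac{z^2}{n^2-z^2}=\sum_{k\geq 1}z^{2k}n^{-2k}$, and interchange the two summations (justified by absolute convergence for $|z|<1$), obtaining
\[
\pi z\cot(\pi z)=1-2\sum_{k\geq 1}\zeta(2k)\,z^{2k}.
\]
This reduces the lemma to reading off the Taylor coefficients of $\pi z\cot(\pi z)$ at $z=0$ from a second, purely algebraic, expansion.

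That second expansion I would derive from the defining exponential generating function $\frac{w}{e^{w}-1}=\sum_{k\geq 0}\frac{B_k}{k!}\,w^k$ of the Bernoulli numbers. Starting from $\cot w=i\,\frac{e^{2iw}+1}{e^{2iw}-1}$, hence $w\cot w=iw+\frac{2iw}{e^{2iw}-1}$, I would substitute $2iw$ into the generating function, use $B_1=-\tfrac12$ to cancel the linear term, discard the vanishing odd Bernoulli numbers $B_{2k+1}=0$ $(k\geq 1)$, and simplify $(2i)^{2k}=(-1)^k2^{2k}$ to get
\[
w\cot w=\sum_{k\geq 0}(-1)^k\frac{B_{2k}(2w)^{2k}}{(2k)!}.
\]
Replacing $w$ by $\pi z$ and matching the coefficient of $z^{2n}$ against the expansion above yields $-2\zeta(2n)=(-1)^n B_{2n}(2\pi)^{2n}/(2n)!$, which rearranges to \eqref{eq190.78}; the case $n=1$, with $B_2=\tfrac16$, recovers $\zeta(2)=\pi^2/6$.

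The convergence bookkeeping --- local uniform convergence of the product and the geometric-series interchange, both needing only $|z|<1$ --- is routine. The one genuinely substantive step is the cotangent--Bernoulli identity: one must correctly pass from $\cot$ to the exponential generating function through the purely imaginary substitution and verify that the coefficients come out real with the stated alternating sign. After that the lemma is a coefficient comparison. An alternative would be to compute the Fourier coefficients of the Bernoulli polynomials and invoke Parseval's identity, but the cotangent route is the most economical given the tools already set up in this section.
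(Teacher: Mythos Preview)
Your argument is correct and complete: the logarithmic derivative of the sinc product gives the cotangent partial-fraction expansion, the geometric-series rearrangement produces $\pi z\cot(\pi z)=1-2\sum_{k\ge1}\zeta(2k)z^{2k}$, and matching against the Bernoulli generating function yields exactly \eqref{eq190.78}.

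However, the paper takes a different (and shorter) route. Rather than developing the cotangent expansion, the paper simply quotes the Fourier series of the periodized Bernoulli polynomial,
\[
\sum_{m\ge1}\frac{\cos(2\pi m x)}{m^{2n}}=\frac{(-1)^{n+1}(2\pi)^{2n}}{2}\,\frac{B_{2n}(\{x\})}{(2n)!},
\]
and evaluates it at $x=0$, where $B_{2n}(0)=B_{2n}$ and the left side is $\zeta(2n)$. So the paper's proof is a one-line specialization of a known Fourier identity, whereas yours is Euler's original derivation from the infinite product; yours is more self-contained and makes direct use of the sinc factorization introduced at the head of the section, while the paper's version assumes the Fourier expansion of $B_{2n}(\{x\})$ as a black box. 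Ironically, the alternative you mention at the end (Fourier coefficients of Bernoulli polynomials, though via direct evaluation rather than Parseval) is essentially what the paper does.
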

\begin{proof} Let $s=2n$, and $B_{2n}(\{x\})$ be the $2n$th Bernoulli polynomial, and let the corresponding Fourier series be
\begin{equation}
\sum_{m \geq 1}\frac{\cos(2 \pi m x)}{m^{2n}}=\frac{(-1)^{n+1}(2 \pi)^{2n}}{2} \frac{B_{2n}(\{x\})}{(2n)!},
\end{equation}
where $\{x\}=x-[x]$ is the fractional part function. Evaluating at $x=0$ yields $B_{2n}(\{x\})=B_{2n}(0)=B_{2n}.$
\end{proof}
This is a standard result widely available in the literature, \cite[Theorem 1.4]{IV03}, \cite[p.\ 18]{CO07}, et alii. This formula expresses each zeta constant $\zeta(2n)$ as a rational multiple of $\pi^{2n}$. The formula for the evaluation of the first even zeta constant $\zeta(2)$, known as the Basel problem, was proved by Euler, later it was generalized to all the even integer arguments. Today, there are dozens of proofs, from different technical perspectives, see \cite{CR99}, and \cite[Chapter 6]{SJ03} for an elementary introduction. The first few are 
\begin{multicols}{3}
\begin{enumerate} [font=\normalfont, label=(\roman*)]
\item $ \displaystyle \zeta(2)=\frac{ \pi^2}{6}, $
\item $ \displaystyle \zeta(4)=\frac{ \pi^4}{90}, $
\item $ \displaystyle \zeta(6)=\frac{ \pi^6}{945} ,$
\end{enumerate}
\end{multicols}

et cetera.\\
\section{Formulas For Odd Zeta Numbers} \label{s195}
Currently, the evaluation of a zeta value at an odd integer argument has one or two complicated transcendental power series. A formula for $\zeta(2n+1)$ expresses this constant as a sum of a rational multiple of $\pi^{2n+1}$ and one or more power series. The earliest such series is the Lerch formula
\begin{eqnarray} \label{eq195.23}
\zeta(2n+1)&=&2^{2n} \pi^{2n+1} \sum_{0 \leq k \leq n+1} \frac{(-1)^{k+1}B_{2k}B_{2n+2-2k}}{(2k)! (2n+2-2k)!}-2 \sum_{m \geq 1} \frac{1}{m^{2n+1}( e^{2 \pi m}-1)}\\
&=&a_n\pi^{2n+1}+b_n, \nonumber
\end{eqnarray}for $n \geq 1$. The number $a_n \in \Q$ is rational, but $b_n \in \R$ has unknown arithmetic properties. This is a special case of the Ramanujan series for the zeta function, see \cite[Theorem 1]{GE72}, \cite{GE72B}, \cite{BS17}, et alii. 
The general forms of these formulas are
\begin{equation} 
\zeta(s)=
\begin{cases}
\displaystyle a_n \pi^{4n-1}  -b_n \sum_{n \geq 1} \frac{1}{n^{4n-1}( e^{2 \pi n}-1)}&\text{if $s=4n-1$},\\
\displaystyle a_n \pi^{4n-3}  -b_n \sum_{n \geq 1} \frac{1}{n^{4n-3}( e^{2 \pi n}-1)}-c_n \sum_{n \geq 1} \frac{1}{n^{4n-3}( e^{2 \pi n}+1)}&\text{if $s=4n-3$},
\end{cases}
\end{equation}
where $a_n, b_n, c_n \in \Q$ are rational numbers. The first few are 
\begin{enumerate} [font=\normalfont, label=(\roman*)]
\item $ \displaystyle \zeta(3)=\frac{7 \pi^3}{180}  -2 \sum_{n \geq 1} \frac{1}{n^3( e^{2 \pi n}-1)} $,
\item $ \displaystyle \zeta(5)=\frac{ \pi^5}{294}  -\frac{72}{35} \sum_{n \geq 1} \frac{1}{n^5( e^{2 \pi n}-1)}-\frac{2}{35} \sum_{n \geq 1} \frac{1}{n^5( e^{2 \pi n}+1)}, $
\item $ \displaystyle \zeta(7)=\frac{19 \pi^7}{56700}  -2 \sum_{n \geq 1} \frac{1}{n^7( e^{2 \pi n}-1)} ,$
\end{enumerate}
et cetera. \\

The proof of the generalized formula is based on the associated theory of modular forms. This analysis involves the modular forms such as
\begin{equation}
F_{s}(\tau)= \sum_{n \geq 0} \sigma_{-s}(n)e^{i 2 \pi n \tau }
\end{equation}
where $\sigma_{-s}(n)=\sum_{d \mid n}d^{-s}$ is the sum of divisors function, and 
\begin{equation}
H_{s}(\tau)=(s-1)F_s(\tau)-i2F_s^{}(\tau)
\end{equation} of a complex variable $\tau \in \C$ such that $\Im m (\tau)>0$.

\begin{thm}  {\normalfont (\cite{GE72})} \label{thm195.88} Let $s=2n+1$, $ n \geq 1$. An odd zeta value has a representation as
\begin{equation} \label{eq195.78}
 \zeta(2n)=C_s\pi^s+D_s, 
\end{equation} 
where the first term is defined by

\begin{equation} 
C_s=
\begin{cases}
\displaystyle \frac{2^{2n+1}}{2n(2n+2)} \sum_{0 \leq k \leq n/2}(-1)^k (2n+2-4v) \binom{2n+2}{4k} B_{2v}B_{2n+2-2k}&\text{if $2n+1 \equiv 1 \bmod 4$},\\
\displaystyle \frac{2^{2n}}{(2n+2)!} \sum_{0 \leq k \leq n+1}(-1)^k \binom{2n+2}{2k} B_{2k}B_{2n+2-2k}&\text{if $2n+1 \equiv 3 \bmod 4$}.
\end{cases}
\end{equation}
and the second term
\begin{equation} 
D_s=
\begin{cases}
\displaystyle \frac{2}{s-1}H_s(i)&\text{if $2n+1 \equiv 1 \bmod 4$},\\
\displaystyle 2H_s(i)&\text{if $2n+1 \equiv 3 \bmod 4$}.
\end{cases}
\end{equation}

\end{thm}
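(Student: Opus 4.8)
The plan is to derive the representation from the modular transformation of the Lambert-type series $F_s$ under the inversion $\tau\mapsto-1/\tau$, reading off the transcendental term $D_s$ from the value $F_s(i)$ and the rational part $C_s\pi^{s}$ from a residue at the origin. The first step is to identify the transcendental piece. Expanding $\sigma_{-s}(n)=\sum_{d\mid n}d^{-s}$ and summing the geometric series gives, for $\Im m(\tau)>0$,
\begin{equation}
\sum_{n\geq1}\sigma_{-s}(n)e^{i2\pi n\tau}=\sum_{d\geq1}d^{-s}\frac{e^{i2\pi d\tau}}{1-e^{i2\pi d\tau}},
\end{equation}
so that at the fixed point $\tau=i$ this equals $\sum_{d\geq1}d^{-s}(e^{2\pi d}-1)^{-1}$, which is precisely the transcendental series of the Lerch formula \eqref{eq195.23}. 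Hence $D_s$, built from $H_s(i)$, is up to a rational factor this series, and the content of the theorem is the closed evaluation of the complementary part as a rational multiple of $\pi^{s}$.

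Second, I would establish a functional equation by integrating the meromorphic kernel
\begin{equation}
\phi(z)=\frac{\cot(\pi z)\cot(\pi\tau z)}{z^{s}}
\end{equation}
over a nested family of squares $C_N$ avoiding the poles. Since $s=2n+1\geq3$ for $n\geq1$, the integral over $C_N$ tends to $0$ as $N\to\infty$, so the residue theorem forces the sum of all residues to vanish. Using $\cot(\pi w)=i\bigl(1+2(e^{i2\pi w}-1)^{-1}\bigr)$, the residues at the nonzero integers and at the points $z=k/\tau$ produce two copies of the pair $\{\zeta(s),F_s(\cdot)\}$, one at $\tau$ and one at $-1/\tau$, weighted by $1$ and $\tau^{2n}$ respectively, while the residue at $z=0$ contributes a finite convolution of Bernoulli numbers coming from $\pi z\cot(\pi z)=\sum_{m\geq0}(-1)^m2^{2m}B_{2m}(\pi z)^{2m}/(2m)!$. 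The resulting relation is exactly the transformation law linking $F_s(\tau)$ and $F_s(-1/\tau)$.

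Third, I would specialize to the fixed point $\tau=i$. Because $k/i=-ik$ gives $\cot(\pi k/i)=-\cot(\pi i k)$, the two boundary sums combine with the weight $\tau^{2n}=(-1)^n$ at $\tau=i$ into an overall factor $1-(-1)^n$. This is nonzero exactly when $n$ is odd, that is $s\equiv3\bmod4$, and in that case the residue identity becomes the genuine linear relation
\begin{equation}
\zeta(s)+2F_s(i)=C_s\pi^{s},
\end{equation}
which reproduces the clean single-series formula of \eqref{eq195.23} (the cases $\zeta(3)$ and $\zeta(7)$) with transcendental part $D_s=2H_s(i)$. When $n$ is even, that is $s\equiv1\bmod4$, the factor $1-(-1)^n$ vanishes and $\phi$ yields only a trivial Bernoulli identity; I would then replace $\phi$ by a parity-corrected kernel reflecting the derivative term built into $H_s$, whose origin residue produces the second closed form for $C_s$ and whose boundary terms supply the factor $2/(s-1)$, matching $\zeta(5)$. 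In both cases, reducing the origin residue through the cotangent expansion above and collecting terms by $\binom{2n+2}{2k}$, together with Lemma \ref{thm190.78}, recovers the stated coefficients.

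The main obstacle will be the residue bookkeeping at the origin and the handling of the cancellation case. Extracting precisely the stated binomial-weighted Bernoulli convolution from the Laurent coefficient of $\cot(\pi z)\cot(\pi\tau z)$ requires careful tracking of factorials so that $\binom{2n+2}{2k}$ emerges with the correct normalization, and in the case $s\equiv1\bmod4$ the selection of a parity-corrected kernel that resolves the vanishing factor $1-(-1)^n$ rather than collapsing to a vacuous identity is delicate; this is exactly where the extra weight $2n+2-4v$ and the factor $2/(s-1)$ arise. By contrast, the decay estimate for $\phi$ on the contours $C_N$, which underlies the vanishing of the contour integral and hence the residue theorem, is routine and uses only $s\geq3$, valid for all $n\geq1$.
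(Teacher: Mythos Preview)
The paper does not supply its own proof of this theorem; it is stated with attribution to Grosswald and followed only by the remark that the analysis ``is discussed in \cite[Theorem 1]{GE72}, \cite{GE72B}, \cite{BS17}, \cite{LR11}, etc.'' So there is nothing in the paper to compare against beyond the citation itself.

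Your outline is in fact the classical Grosswald--Ramanujan route: integrate $\cot(\pi z)\cot(\pi\tau z)/z^{s}$ over expanding squares, sum the residues to obtain the modular transformation law for $F_s$, and specialize to the fixed point $\tau=i$. For $s\equiv 3\bmod 4$ your sketch is correct and matches what the cited references do; the factor $1-(-1)^n$ is nonzero, the residue identity is nondegenerate, and one reads off $\zeta(s)+2F_s(i)=C_s\pi^s$ with $C_s$ the Bernoulli convolution coming from the Laurent expansion at $z=0$.

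The genuine gap is the case $s\equiv 1\bmod 4$. You correctly observe that the naive specialization collapses, but your proposed fix --- ``replace $\phi$ by a parity-corrected kernel reflecting the derivative term built into $H_s$'' --- is not what is actually done and, as stated, is not a workable plan. In Grosswald's argument one does not change the kernel; one \emph{differentiates the transformation identity with respect to $\tau$ before setting $\tau=i$}. Differentiation is what introduces $F_s'(\tau)$, hence the combination $H_s(\tau)=(s-1)F_s(\tau)-2iF_s'(\tau)$, and it is also what produces the extra linear weight $(2n+2-4k)$ in the Bernoulli sum and the prefactor $2/(s-1)$. Until that differentiation step is carried out explicitly, the $s\equiv 1\bmod 4$ branch of the theorem is not established by your argument.
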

The first term $C_s$ is a rational number, but the arithmetic properties of the second term $D_s$ remains unknown. In fact, it is an active area of research in number theory. This analysis is discussed in \cite[Theorem 1]{GE72}, \cite{GE72B}, \cite{BS17}, \cite{LR11}, etc. These formulas express each zeta constant $\zeta(2n+1)$ as a nearly rational multiple of $\pi^{2n+1}$. These analysis are summarized in a compact formula.

\begin{dfn} \label{dfn195.37}  { \normalfont Let $s \geq 2$ be an integer. The $\pi$-representation of the zeta constant $\zeta(s)=\sum_{n \geq 1}n^{-s}$ is defined by the formula
\begin{equation} 
\zeta(s)=
\begin{cases}
\displaystyle a_n \pi^{s}&\text{if $s=4n, 4n+2$},\\
\displaystyle a_n \pi^{s} +b_n&\text{if $s=4n+1,4n+3$},
\end{cases}
\end{equation}
where $ a_n \in \Q$ is a rational number and $b_n \in \R^{\times}$ is a real number.
}
\end{dfn}

\section{Powers of Pi} \label{s185}
The irrationality proof for $\pi$ uses the continued fraction of the tangent function $\tan(x)$, the fact that the numbers $\tan(r)$ are irrationals for any nonzero rational number $r \in \Q^{\times}$, and the value $\arctan(1)=\pi/4$ to indirectly show that the continued fraction 
\begin{equation}
\pi=[3;7,15,1,292,1,1,1,2,1,3,1,14, \ldots]
\end{equation}  
is infinite, see \cite[p.\ 129]{BB04}, \cite{LM97}, \cite{NI47}. Later, simpler versions and new proofs were found by several authors, \cite{NI47}, \cite[p.\ 35]{AZ14}, \cite{SJ05}. 
\begin{thm} \label{thm185.17}   The number $\pi^{s}$ is irrational for any rational power $s \in \Z^{\times}$.
\end{thm}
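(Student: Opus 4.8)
The plan is to separate the sign of the exponent and then reduce everything to the transcendence of $\pi$. First I would note that it suffices to handle $s\geq 1$: for $s\in\Z^{\times}$ with $s<0$ one has $\pi^{s}=1/\pi^{-s}$, and the reciprocal of an irrational number is irrational (if $1/\pi^{-s}=p/q\in\Q$, then $p\neq 0$ because $\pi^{s}\neq 0$, whence $\pi^{-s}=q/p\in\Q$, contradicting the positive case). If one wishes to read ``rational power'' literally and allow $s=p/q\in\Q^{\times}$, the integer case again suffices, since a rational $\pi^{p/q}$ would make $\pi^{p}=(\pi^{p/q})^{q}$ rational.

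Next I would invoke Lindemann's theorem: $\pi$ is transcendental over $\Q$. Assume toward a contradiction that $\pi^{s}=r\in\Q$ for some integer $s\geq 1$. Then $\pi$ is a root of the nonzero polynomial $X^{s}-r\in\Q[X]$, so $\pi$ is algebraic over $\Q$, a contradiction. Hence $\pi^{s}$ is irrational; indeed the same argument shows $\pi^{s}$ is transcendental. A variant closer to the exponential-sum viewpoint of this note applies the Hermite--Lindemann theorem to $e^{i\pi}=-1$: if $\pi^{s}$ were rational then $\pi$, and therefore $i\pi$, would be algebraic and nonzero, forcing $e^{i\pi}$ to be transcendental, which is absurd. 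Both routes are uniform in $s$, so no case analysis on the residue of $s$ is needed, unlike in the formulas of Section~\ref{s195}.

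The one genuine obstacle --- really a caveat that should be stated explicitly --- is that the elementary irrationality results for $\pi$ and $\pi^{2}$ (Lambert, Niven, and the tangent continued fraction recalled above) do \emph{not} by themselves yield irrationality of $\pi^{s}$ for every $s$, because irrationality is not preserved under taking integer powers, as $\sqrt{2}$ already demonstrates. Thus a transcendence-type input is unavoidable, and Lindemann's theorem is the cleanest one available. An ``elementary'' substitute would require a separate Niven-style integral estimate for each power $\pi^{s}$, and it is not clear these assemble into a single proof; since the transcendence of $\pi$ is classical and freely available, I would simply cite it and treat the theorem as an immediate corollary, with the reduction to $s\geq 1$ above handling the remaining cases.
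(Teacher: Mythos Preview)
Your proposal is correct and aligns with the paper's treatment. The paper does not give a separate proof of Theorem~\ref{thm185.17}; it states the result with references and then proves the stronger Theorem~\ref{thm185.27} (transcendence of $\pi^{s}$) by the same mechanism you use---namely, if $\pi^{s}$ satisfied a nontrivial polynomial relation over $\Q$, composing with $x\mapsto x^{s}$ would produce a polynomial relation for $\pi$ itself, contradicting Lindemann. Your argument is simply the degree-one instance of this (the polynomial $X^{s}-r$), together with the clean reduction to $s\geq 1$ that the paper leaves implicit.
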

The nonalgebraic nature of the number $\pi$ can be extended to all the powers by induction, or by other method as done in the second part of the result.
\begin{thm} \label{thm185.27}   The number $\pi^{s}$ is transcendental for any rational power $s \in \Z^{\times}$.
\end{thm}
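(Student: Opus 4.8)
The plan is to obtain this not through an independent elementary argument, as in Theorem \ref{thm185.17}, but by reducing it to the transcendence of $\pi$ itself, together with the elementary fact that the field $\overline{\Q}$ of algebraic numbers is closed under extraction of roots.

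First I would recall the Lindemann--Weierstrass theorem in the convenient form: if $\alpha$ is a nonzero algebraic number, then $e^{\alpha}$ is transcendental. Taking $\alpha = i\pi$ immediately yields the transcendence of $\pi$, since if $\pi$ were algebraic then $i\pi$ would be a nonzero algebraic number and $e^{i\pi} = -1$ would be transcendental, which is absurd. (Equivalently, one may simply invoke the classical theorem of Lindemann that $\pi$ is transcendental, the same input behind Theorem \ref{thm185.17}.)

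Next, fix a nonzero integer $s \in \Z^{\times}$ and argue by contradiction, assuming $\beta = \pi^{s}$ is algebraic. For $s \geq 1$, the number $\pi$ is a root of $X^{s} - \beta \in \overline{\Q}[X]$, a polynomial with algebraic coefficients, so $\pi \in \overline{\Q}$, contradicting the transcendence of $\pi$. For $s \leq -1$, write $s = -t$ with $t \geq 1$; then $\beta$ is a nonzero algebraic number, hence $\pi^{t} = \beta^{-1}$ is algebraic, which returns us to the case already treated. In either case we reach a contradiction, and therefore $\pi^{s}$ is transcendental for every $s \in \Z^{\times}$.

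I do not anticipate a genuine obstacle: the one nontrivial ingredient is the transcendence of $\pi$, which I would import as a black box, and the remainder is routine field theory. The only point needing a little care is to keep the positive and negative exponents separate, handled above by passing from $\pi^{-t}$ to its reciprocal; and if one wishes to avoid quoting that $\overline{\Q}$ is algebraically closed, one can instead phrase the contradiction as follows: any relation $\sum_{0 \leq j \leq d} c_{j}\,(\pi^{s})^{j} = 0$ with $c_{j} \in \Q$ not all zero becomes, after clearing the sign of $s$, a nontrivial polynomial equation for $\pi$ over $\Q$, which is again impossible.
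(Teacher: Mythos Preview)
Your argument is correct and follows essentially the same route as the paper: both establish the transcendence of $\pi$ via Lindemann--Weierstrass applied to $e^{i\pi}=-1$, and then derive a contradiction by turning an assumed algebraic relation for $\pi^{s}$ into one for $\pi$ itself (your alternative formulation with $\sum c_{j}(\pi^{s})^{j}=0$ is exactly the paper's step (ii)). You are in fact slightly more careful than the paper, since you treat negative $s$ explicitly by passing to the reciprocal, whereas the paper's written argument only handles $s\in\N$.
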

\begin{proof} (i) Let $s=1$. Assume $\pi$ and its unit $i \pi$ are algebraic over the rational number, and apply the Lindemann-Weierstrass theorem to the exponential $e^{i \pi}=-1$. Since this contradicts the assumption, the number $i \pi$ is transcendental.\\

(ii) For every polynomial $f(x) \in \Z[x]$ the evaluation $f(\pi)\ne 0$ since $\pi$ is nonalgebraic (transcendental). Let $s\in \N$, and assume that $\pi ^s$ is algebraic. Then, there exists a polynomial $g(x) \in \Z[x]$ of degree $\deg g =n$, such that
\begin{eqnarray}
0&=& g(\pi ^s) \\
&=&a_n\left (\pi^{s}\right )^n+a_{n-1}\left (\pi^{s}\right )^{n-1}\cdots+a_1\pi^s+a_0 \nonumber\\
&=&a_n\pi ^{sn}+a_{n-1}\pi^{s(n-1)}\cdots+a_1\pi^s+a_0 \nonumber \\
&=& g_s(\pi)\nonumber, 
\end{eqnarray}
where $g_s(x) \in \Z[x]$ is a polynomial of degree $\deg g_s=sn$. This implies that $\pi$ is algebraic. But, this contradicts the nonalgebraic property of the number $\pi$.       
\end{proof}

Surely, these results can be extended to the rational powers $\pi^s$, where $s \in \Q^{\times}$.

\section{The Irrationality of Some Constants}\label{sec4}
The different analytical techniques utilized to confirm the irrationality, transcendence, and irrationality measures of many constants are important in the development of other irrationality proofs. Some of these results will be used later on.
\begin{thm} \label{thm4.1} The real numbers \(\pi, \;\zeta (2),\text{ and } \zeta (3)\) are irrational numbers.
\end{thm}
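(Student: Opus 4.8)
The plan is to treat the three numbers separately, leaning entirely on material already developed in the excerpt. For $\pi$, I would invoke Theorem \ref{thm185.17} with exponent $s=1$, which asserts that $\pi^{s}$ is irrational for every $s\in\Z^{\times}$. Should a self-contained argument be preferred, one reproduces Niven's short proof: assuming $\pi=a/b$ with $a,b\in\N$, set $f(x)=x^{n}(a-bx)^{n}/n!$ and $F(x)=f(x)-f''(x)+f^{(4)}(x)-\cdots$, observe that $\int_{0}^{\pi}f(x)\sin x\,dx=F(0)+F(\pi)\in\Z$, and note that $0<\int_{0}^{\pi}f(x)\sin x\,dx<\pi\cdot\pi^{n}a^{n}/n!\to 0$, so for large $n$ this integer lies strictly between $0$ and $1$, a contradiction.

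For $\zeta(2)$, I would combine Euler's evaluation $\zeta(2)=\pi^{2}/6$ from Lemma \ref{thm190.78} with the irrationality of $\pi^{2}$ from Theorem \ref{thm185.17} (the case $s=2$). Since $1/6\in\Q^{\times}$, a nonzero rational multiple of an irrational number is irrational: if $\zeta(2)=p/q$ with $p,q\in\Z$ were rational, then $\pi^{2}=6p/q\in\Q$, contradicting Theorem \ref{thm185.17}. Hence $\zeta(2)$ is irrational.

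For $\zeta(3)$, the claim is precisely Apéry's theorem, which appears in the excerpt as Theorem \ref{thm599.01}(i) and is attributed to \cite{AR79}; I would cite it directly. If one wishes to indicate the mechanism, recall the Apéry construction: rational approximations $p_{n}/q_{n}$ assembled from $\sum_{k}\binom{n}{k}^{2}\binom{n+k}{k}^{2}$ with $q_{n}$ a bounded multiple of $\lcm(1,\dots,n)^{3}$, satisfying $0<|q_{n}\zeta(3)-p_{n}|\to 0$ at a rate incompatible with rationality once one inserts the prime number theorem estimate $\lcm(1,\dots,n)=e^{n+o(n)}$.

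The only genuine difficulty is $\zeta(3)$: the irrationality of $\pi$ and of $\zeta(2)$ is classical and already recorded in the excerpt, whereas $\zeta(3)$ rests on Apéry's theorem, whose complete proof is substantial. Since Theorem \ref{thm599.01} may be assumed, the main obstacle collapses to a bookkeeping step — verifying that each of the three assertions is an instance of an already-established result — so the proof is brief.
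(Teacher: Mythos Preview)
Your proposal is correct. Each of the three reductions is valid: Theorem~\ref{thm185.17} with $s=1$ handles $\pi$; Lemma~\ref{thm190.78} together with Theorem~\ref{thm185.17} at $s=2$ handles $\zeta(2)$; and Theorem~\ref{thm599.01}(i) is exactly Ap\'ery's result for $\zeta(3)$. The optional Niven and Ap\'ery sketches you include are accurate as well.

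The paper itself supplies no proof for this theorem: it simply remarks that the irrationality proofs ``are widely available in the open literature'' and lists references \cite{AR79}, \cite{BF79}, \cite{HD01}, \cite{SJ05}. So your treatment is not so much a different route as a strictly more informative one --- you ground each assertion in a specific result already stated elsewhere in the paper, whereas the paper defers entirely to external sources. The advantage of your version is that it makes the logical dependencies within the manuscript explicit; the paper's version has the advantage of brevity and of not pretending to re-prove deep classical facts.
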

The various irrationality proofs of these numbers are widely available in the open literature. These technique are valuable tools in the theory of irrational numbers, refer to \cite{AR79}, \cite{BF79}, \cite{HD01}, \cite{SJ05}, and others.\\

\begin{thm} \label{thm4.2} For any fixed \(n \in \mathbb{N}\), and the nonprincipal character $\chi \mod 4$, the followings statements are valid.
\begin{enumerate} [font=\normalfont, label=(\roman*)]
\item The real number \(\displaystyle \zeta (2n)=\frac{(-1)^{n+1}2^{2n}B_{2n}}{(2n)!}\pi^{2n}\) is a transcendental number,
\item The real number \(\displaystyle L(2n+1,\chi )=\frac{(-1)^n E_{2n}}{2^{2n+2}(2n)!}\pi^{2n+1}\) is a transcendental number, where \(B_{2n}\text{ and } E_{2n}\) are the Bernoulli and Euler numbers respectively.
\end{enumerate}
\end{thm}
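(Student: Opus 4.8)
The plan is to reduce both statements to two facts already in place: that each constant is a \emph{nonzero} rational multiple of a power of $\pi$, and that every power $\pi^{s}$ with $s\in\Z^{\times}$ is transcendental (Theorem \ref{thm185.27}). The transfer between the two is immediate: if $r\in\Q^{\times}$ and $r\pi^{s}$ were algebraic, then $\pi^{s}=r^{-1}(r\pi^{s})$ would be algebraic, contradicting Theorem \ref{thm185.27}. So for each part it suffices to write the constant as $r\pi^{s}$ with $r\in\Q^{\times}$ and $s\in\Z^{\times}$.

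For part (i) I would quote Euler's formula in the form of Lemma \ref{thm190.78}, which presents $\zeta(2n)$ as an explicit rational multiple of $\pi^{2n}$, the rational factor being assembled from $B_{2n}$. The only verification needed is that this factor is nonzero, i.e.\ $B_{2n}\neq 0$ for $n\geq 1$; this is classical and in any case follows directly from \eqref{eq190.78} together with $\zeta(2n)>0$. Taking $s=2n\in\Z^{\times}$, Theorem \ref{thm185.27} and the transfer remark finish part (i).

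For part (ii) the required input is the evaluation of the Dirichlet $L$-function of the nonprincipal character $\chi\bmod 4$ (the Dirichlet beta function) at the odd integers, and I would derive it in exact parallel with the proof of Lemma \ref{thm190.78}. Using the Fourier expansion of the Euler polynomial $E_{2n}(\{x\})$ (equivalently, applying the Hurwitz formula to $L(s,\chi)=4^{-s}(\zeta(s,\tfrac14)-\zeta(s,\tfrac34))$) and evaluating at $x=\tfrac12$, so that $\sum_{k\ge 0}(-1)^{k}(2k+1)^{-(2n+1)}=L(2n+1,\chi)$, one obtains
\begin{equation}
L(2n+1,\chi)=\frac{(-1)^{n}E_{2n}}{2^{2n+2}(2n)!}\,\pi^{2n+1},
\end{equation}
which is the claimed identity; the normalization is confirmed by the known special values $L(1,\chi)=\pi/4$ and $L(3,\chi)=\pi^{3}/32$. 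Since the Euler numbers satisfy $E_{2n}\neq 0$ for every $n\geq 0$ (again classical: they are nonzero and alternate in sign), the rational coefficient is nonzero, and with $s=2n+1\in\Z^{\times}$ Theorem \ref{thm185.27} and the transfer remark complete part (ii).

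The routine ingredients are the nonvanishing of $B_{2n}$ and $E_{2n}$ and the one-line transfer of transcendence across a nonzero rational factor. The only substantive step is the closed-form evaluation of $L(2n+1,\chi)$, and I expect that to be the main obstacle only in the bookkeeping sense: it requires reproducing the Fourier-analytic computation that mirrors Euler's (or citing it precisely), but contributes no new idea, and once it is available the transcendence conclusion is immediate from Theorem \ref{thm185.27}.
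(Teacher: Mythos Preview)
Your proposal is correct and follows the same route as the paper: both reduce to Theorem \ref{thm185.27} (equivalently Lindemann--Weierstrass) applied to the nonzero rational multiples of $\pi^{2n}$ and $\pi^{2n+1}$ displayed in the statement. The paper's proof is a single line invoking that theorem; you supply more detail than necessary---the closed forms are part of the hypothesis, so re-deriving the $L(2n+1,\chi)$ evaluation and checking $B_{2n},E_{2n}\ne 0$ are careful but not required for the argument as stated.
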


\begin{proof} Apply Theorem \ref{thm185.27} or the Lindemann-Weierstrass theorem to the transcendental number $\pi$. \end{proof}

The first few nonvanishing Bernoulli numbers are these:
\begin{multicols}{3}
\begin{enumerate} [font=\normalfont, label={}]
\item $B_0=1$, \item $B_1=\frac{-1}{2}$, \item $B_2=\frac{1}{6}$, \item $B_4=\frac{-1}{30}$,
\item $B_6=\frac{1}{42}$, \item $B_8=\frac{-1}{30}$,$\ldots.$ 
\end{enumerate}
\end{multicols}
And the first few nonvanishing Euler numbers are these:
\begin{multicols}{3}
\begin{enumerate} [font=\normalfont, label={}]
\item $E_0=1$, \item $E_2=-1$,\item $E_4=5$,\item $E_6=-161$, \item $E_8=1385$, \item $E_{10}=-50521 \ldots$.    
\end{enumerate}
\end{multicols}
The generalization of these results to number fields is discussed in \cite{ZD86}, and related literature.

\begin{thm} \label{thm4.3} {\normalfont (Klinger)} Let $\mathcal{K}$ be a number field extension of degree $k=[\mathcal{K} : \mathbb{Q}]$, and discriminant $D=disc (\mathcal{K})$. Then

\begin{enumerate} [font=\normalfont, label=(\roman*)]
\item  If $D>0$, the number field is totally real and $\displaystyle \zeta_{\mathcal{K}}(2n)=r_k\frac{\pi^{2nk}}{\sqrt{D}}$, where $n \geq 1$, and $r_k \in \mathbb{Q}$ is a transcendental number.
\item  If $D<0$, the number field is totally complex and $\displaystyle \zeta_{\mathcal{K}}(1-2n)=r_k$, where $n \geq 1$, and $r_k \in \mathbb{Q}$.
\end{enumerate}
\end{thm}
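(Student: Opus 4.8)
The plan is to derive part (i) from the Siegel--Klingen theory of special values of Dedekind zeta functions, and part (ii) from the functional equation of $\zeta_{\mathcal{K}}$ together with part (i). For (i), the hypothesis makes $\mathcal{K}$ totally real of degree $k=r_1$ with no complex place ($r_2=0$), consistently with $D>0$ since the sign of the discriminant is $(-1)^{r_2}$. Write the completed zeta function $\Lambda_{\mathcal{K}}(s)=|D|^{s/2}\Gamma_{\R}(s)^{r_1}\Gamma_{\C}(s)^{r_2}\zeta_{\mathcal{K}}(s)$, where $\Gamma_{\R}(s)=\pi^{-s/2}\Gamma(s/2)$ and $\Gamma_{\C}(s)=2(2\pi)^{-s}\Gamma(s)$, with functional equation $\Lambda_{\mathcal{K}}(s)=\Lambda_{\mathcal{K}}(1-s)$ (see \cite[Theorem 1.6]{IV03}). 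Evaluating at $s=2n$ and solving for $\zeta_{\mathcal{K}}(1-2n)$, using the elementary values $\Gamma_{\R}(2n)=\pi^{-n}(n-1)!$ and $\Gamma(1/2-n)=(-4)^n n!\sqrt{\pi}/(2n)!$, one gets an identity $\zeta_{\mathcal{K}}(2n)=q_n\,\dfrac{\pi^{2nk}}{\sqrt{D}}\,\zeta_{\mathcal{K}}(1-2n)$ with $q_n\in\mathbb{Q}^{\times}$; here every power of $\pi$ cancels against the gamma factors, and the single $\sqrt{D}$ in the denominator is produced exactly by the half-integer difference in the exponent of $|D|^{s/2}$ between the points $s$ and $1-s$. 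It then remains to prove the purely arithmetic statement $\zeta_{\mathcal{K}}(1-2n)\in\mathbb{Q}$, together with $\zeta_{\mathcal{K}}(1-2n)\neq 0$ (automatic, since $\Lambda_{\mathcal{K}}(1-2n)=\Lambda_{\mathcal{K}}(2n)\neq 0$ and the gamma factors at $s=1-2n$ are finite and nonzero). This is the substantive step, and I would carry it out by Siegel's method: realise $\zeta_{\mathcal{K}}(1-2n)$ as a rational multiple of the constant Fourier coefficient at the cusp $\infty$ of a Hilbert modular Eisenstein series of parallel weight $2n$ for $\mathrm{SL}_2(\mathcal{O}_{\mathcal{K}})$, check that its non-constant coefficients are rational (finite sums of divisor-type functions on $\mathcal{O}_{\mathcal{K}}$), and invoke the $\mathbb{Q}$-rational structure on the space of Hilbert modular forms of weight $2n$ — rationality of the Fourier expansion at $\infty$ being a $\mathbb{Q}$-structure — to force the constant term to be rational as well. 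Equivalently, one may quote Siegel's 1969 closed formula expressing $\zeta_{\mathcal{K}}(1-2n)$ as an explicit finite rational combination of products of generalised Bernoulli numbers attached to $\mathcal{K}$. Setting $r_k:=q_n\,\zeta_{\mathcal{K}}(1-2n)\in\mathbb{Q}^{\times}$ yields $\zeta_{\mathcal{K}}(2n)=r_k\,\pi^{2nk}/\sqrt{D}$.

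The transcendence assertion is then formal: $\sqrt{D}$ is algebraic and $r_k$ is a nonzero rational, so $\zeta_{\mathcal{K}}(2n)$ is a nonzero algebraic multiple of $\pi^{2nk}$; by Theorem \ref{thm185.27} the number $\pi^{2nk}$ is transcendental, hence so is $\zeta_{\mathcal{K}}(2n)$. (The coefficient $r_k$ is itself rational; I read the displayed clause as asserting transcendence of the zeta value, in exact parallel with Theorem \ref{thm4.2}(i).) For part (ii): the hypothesis gives $\mathcal{K}$ at least one complex place, $r_2\geq 1$ — either because $D<0$ forces $r_2$ to be odd, or because total complexity with $k\geq 2$ gives $r_2=k/2\geq 1$. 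In the functional equation the factor $\Gamma_{\C}(1-2n)^{r_2}$ then has a pole of order $r_2$ at the integer $s=1-2n\in\{-1,-3,\dots\}$ (from the pole of $\Gamma(s)$), while $\Lambda_{\mathcal{K}}(1-2n)=\Lambda_{\mathcal{K}}(2n)$ is finite and $\Gamma_{\R}(1-2n)$ is finite and nonzero; comparing orders of vanishing forces $\zeta_{\mathcal{K}}(1-2n)$ to vanish to order $r_2\geq 1$. Hence $\zeta_{\mathcal{K}}(1-2n)=0$, which is rational, and one takes $r_k=0$. (In the complementary totally real case $r_2=0$ the same bookkeeping returns instead the nonzero rational value found in part (i), which is the version usually quoted.)

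The decisive obstacle is the arithmetic input to part (i) — the rationality of $\zeta_{\mathcal{K}}(1-2n)$. One cannot avoid either the theory of Hilbert modular forms — the compatibility of the $\mathbb{Q}$-structure with the Fourier expansion at $\infty$, which is what turns the comparison of the constant term of the Eisenstein series against a nonzero higher coefficient into a genuine rationality statement — or, equivalently, Siegel's explicit rational formula for that value. Everything else, including the functional equation, the $\pi$- and $\sqrt{D}$-bookkeeping, the order-of-vanishing argument in (ii), and the transcendence deduction, is routine.
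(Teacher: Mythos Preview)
Your outline is mathematically sound and considerably more ambitious than what the paper actually does. The paper treats the identity $\zeta_{\mathcal{K}}(2n)=r_k\,\pi^{2nk}/\sqrt{D}$ (and likewise the rationality of $\zeta_{\mathcal{K}}(1-2n)$) as the content of Klingen's theorem itself, imported wholesale; its own proof consists only of the one-line transcendence deduction for part~(i): if $\zeta_{\mathcal{K}}(2n)$ were algebraic then, since $r_k\in\Q$ and $\sqrt{D}$ are algebraic, $\pi^{2nk}$ would be algebraic, contradicting Theorem~\ref{thm185.27}. No argument is given for part~(ii), and no derivation of the formula is attempted.

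By contrast you actually sketch how the Klingen--Siegel formula comes about: the functional equation of $\Lambda_{\mathcal{K}}$ produces the factor $\pi^{2nk}$ and the half-integral power of $|D|$ (your bookkeeping is correct up to absorbing an integer power of $D$ into the rational constant, since $\sqrt{D}=D/\sqrt{D}$), and you correctly isolate Siegel's rationality of $\zeta_{\mathcal{K}}(1-2n)$ for totally real $\mathcal{K}$, via Hilbert modular Eisenstein series, as the substantive arithmetic input. Your treatment of~(ii) through the pole of $\Gamma_{\C}(s)$ at negative odd integers, forcing $\zeta_{\mathcal{K}}(1-2n)=0$ whenever $r_2\geq 1$, is also correct and goes beyond what the paper supplies. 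Your reading of the awkward clause ``$r_k\in\Q$ is a transcendental number'' as asserting transcendence of $\zeta_{\mathcal{K}}(2n)$ (parallel to Theorem~\ref{thm4.2}) matches the paper's intent. What your approach buys is a self-contained explanation of \emph{why} the formula holds; what the paper's approach buys is brevity, at the price of outsourcing the entire theorem to the literature and keeping only the transcendence corollary.
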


\begin{proof} (i) If $ \zeta_{\mathcal{K}}(2n)$ is algebraic, then there exists a rational polynomial $f(x) \in \Z[x]$ of even degree $\deg f =2d$ such that $f( \zeta_{\mathcal{K}}(2n))=0$. But this is false. It contradicts the nonalgebraic property of the real number $\pi$. \end{proof}

\section{Problems}
\subsection{Nonalgebraic Numbers}
\begin{exe} 
{\normalfont Prove that $\pi$ is nonalgebraic implies that $\pi^r$ is nonalgebraic for any rational number $r \in \Q^{\times}$.
}
\end{exe}

\begin{exe} 
{\normalfont Prove that $\alpha$ is nonalgebraic implies that $\alpha^r$ is nonalgebraic for any rational number $r \in \Q^{\times}$.
}
\end{exe}
\begin{exe} 
{\normalfont Prove that $\pi^n/\sqrt[k]{2}$ is nonalgebraic for any pair $k,n \in \Z^{\times}$.
}
\end{exe}
\begin{exe} 
{\normalfont Is the real number $e+\pi$ algebraic, is there a rational polynomial $f(x) \in \mathbb{Z}[x]$ such that $f(e+\pi)=0$?
}
\end{exe}
\begin{exe} 
{\normalfont Is the real number $e/\pi$ algebraic, is there a rational polynomial $f(x) \in \mathbb{Z}[x]$ such that $f(e/\pi)=0$?
}
\end{exe}
\subsection{Rational/Irrational Numbers}
\begin{exe} 
{\normalfont Prove or disprove that $\zeta(3)=a\pi^{3}+b$, where $a,b \in \Q^{\times}$.
}
\end{exe}
\begin{exe} 
{\normalfont Prove or disprove that $\zeta(5)=a\pi^{5}+b$, where $a,b \in \Q^{\times}$.
}
\end{exe}
\begin{exe} 
{\normalfont Prove or disprove that $\zeta(2n+1)=a\pi^{2n+1}+b$, where $a,b \in \Q^{\times}$, and $n \geq 1$.
}
\end{exe}
\begin{exe} 
{\normalfont It is known that $\zeta(-1)=1/12.$ Prove or disprove that $\zeta(-3)$ rational/irrational.
}
\end{exe}

\subsection{Bounded/Unbounded Partial Quotients}
\begin{exe} 
{\normalfont Does the real number $e/\pi=[a_0,a_1,a_2, \ldots]$ have unbounded partial quotients $a_n \in \mathbb{N}$? } \end{exe}

\begin{exe} 
{\normalfont Is the real number $e+\pi$ algebraic, is there a rational polynomial $f(x) \in \mathbb{Z}[x]$ such that $f(e+\pi)=0$?
}
\end{exe}

\begin{exe} 
{\normalfont Does the real number $e+\pi=[a_0,a_1,a_2, \ldots]$ have unbounded partial quotients $a_n \in \mathbb{N}$?
}
\end{exe}

\subsection{Normal/Nonnormal Numbers}

\begin{exe} 
{\normalfont Prove or disprove whether or not $\pi$ is a normal number.
}
\end{exe}
\begin{exe} 
{\normalfont Prove or disprove whether or not $\pi^2$ is a normal number.
}
\end{exe}
\begin{exe} 
{\normalfont Prove or disprove whether or not $e$ is a normal number.
}
\end{exe}
\begin{exe} 
{\normalfont Prove or disprove whether or not $e+\pi$ is a normal number.
}
\end{exe}

\currfilename.\\

\end{document}